\title{A Primal-Dual Parallel Method with $O(1/\epsilon)$ Convergence for Constrained Composite Convex Programs\thanks{This paper extends our conference paper \cite{YuNeely16CDC} by considering composite convex programs and proposing new algorithm parameter rules that are irrelevant to the optimal Lagrange multipliers.}}
\author{
  Hao Yu\thanks{Department of Electrical Engineering, University of Southern California, Los Angeles, CA (\email{yuhao@usc.edu},
    \email{mjneely@usc.edu}).}
  \and
  Michael J. Neely\footnotemark[2]
}
\newtheorem{Def}{Definition}
\newtheorem{Assumption}{Assumption}
\newtheorem{Thm}{Theorem}
\newtheorem{Lem}{Lemma}
\newtheorem{Cor}{Corollary}
\DeclareMathOperator*{\argmin}{argmin}
\newcommand*{\tran}{^{\mkern-1.5mu\mathsf{T}}}
\begin{document}
\maketitle

\begin{abstract}
This paper considers large scale constrained convex (possibly composite and non-separable) programs, which are usually difficult to solve by interior point methods or other Newton-type methods due to the non-smoothness or the prohibitive computation and storage complexity for Hessians and matrix inversions.  Instead, they are often solved by first order gradient based methods or decomposition based methods.  The conventional primal-dual subgradient method, also known as the Arrow-Hurwicz-Uzawa subgradient method, is a low complexity algorithm with an $O(1/\epsilon^2)$ convergence time. Recently, a new Lagrangian dual type algorithm with a faster $O(1/\epsilon)$ convergence time is proposed in Yu and Neely (2017).  However, if the objective or constraint functions are not separable, each iteration of the Lagrangian dual type method in Yu and Neely (2017) requires to solve a unconstrained convex program, which can have huge complexity. This paper proposes a new primal-dual type algorithm with $O(1/\epsilon)$ convergence for general constrained convex programs. Each iteration of the new algorithm can be implemented in parallel with low complexity even when the original problem is composite and non-separable. 

\end{abstract}

\begin{keywords}
constrained convex programs, composite convex programs, parallel methods, convergence time 
\end{keywords}

\begin{AMS}
90C25, 90C30
\end{AMS}

\section{Introduction}\label{sec:intro}
Recall that a function $h(\mathbf{x})$ is said to be separable (with respect to its vector variable $\mathbf{x}$) if it can be written as the summation of multiple smaller functions, each of which only involves disjoint components or blocks of $\mathbf{x}$, e.g, $h(\mathbf{x}) = \sum_{i=1}^{n} h^{(i)}(x_{i})$. Fix positive integers $n$ and $m$, which are typically large.  Consider the following constrained convex program:
\begin{align}
\text{min} \quad &F(\mathbf{x}) \overset{\Delta}{=} f(\mathbf{x}) + \tilde{f}(\mathbf{x})\label{eq:program-objective}\\
\text{s.t.} \quad  &  G_k(\mathbf{x}) \overset{\Delta}{=} g_k(\mathbf{x})  + \tilde{g}_k(\mathbf{x})\leq  0, \forall k\in\{1,2,\ldots,m\} \label{eq:program-inequality-constraint}\\
			 &  \mathbf{x}\in \mathcal{X} \label{eq:program-set-constraint}
\end{align}
where set $\mathcal{X}\subseteq \mathbb{R}^{n}$ is a closed convex set;  function $f(\mathbf{x})$ is convex and smooth (but possibly non-separable) on $\mathcal{X}$; function $\tilde{f}(\mathbf{x})$ is convex and  separable  (but possibly non-smooth) on $\mathcal{X}$; functions $g_k(\mathbf{x}),\forall k \in\{1,2,\ldots,m\}$ are convex,  Lipschitz continuous and smooth (but possibly non-separable) on $\mathcal{X}$, and functions $\tilde{g}_k(\mathbf{x})$ are convex, Lipschitz continuous and separable (but possibly non-smooth) on $\mathcal{X}$.  The convex program \eqref{eq:program-objective}-\eqref{eq:program-set-constraint} is called a constrained composite convex program since either its objective function $F(\mathbf{x})$ or each of its constraint functions $G_k(\mathbf{x})$ is in general the sum of a smooth function and a non-smooth function.

Denote the stacked vector of functions via $\mathbf{g}(\mathbf{x}) = [g_1(\mathbf{x}), g_2(\mathbf{x}), \ldots, g_m(\mathbf{x})]\tran$; $\tilde{\mathbf{g}}(\mathbf{x}) = \big[\tilde{g}_1(\mathbf{x}), \tilde{g}_2(\mathbf{x}), \ldots, \tilde{g}_m(\mathbf{x})\big]\tran$ and $\mathbf{G}(\mathbf{x}) = \big[G_1(\mathbf{x}), G_2(\mathbf{x}), \ldots, G_m(\mathbf{x})\big]\tran$.  The Lipschitz continuity of each $g_{k}(\mathbf{x})$ and $\tilde{g}_k(\mathbf{x})$ implies that $\mathbf{g}(\mathbf{x}) + \tilde{\mathbf{g}}(\mathbf{x})$ is Lipschitz continuous on $\mathcal{X}$. Throughout this paper, we use $\Vert\mathbf{x}\Vert$ to denote the Euclidean norm of vector $\mathbf{x}$, also known as the $l_2$ norm, and have the following assumptions on convex program  \eqref{eq:program-objective}-\eqref{eq:program-set-constraint}:

\begin{Assumption}[Basic Assumptions] \label{as:basic}~
\begin{itemize}
\item  There exists a (possibly non-unique) optimal solution $\mathbf{x}^\ast\in \mathcal{X}$ that solves convex program \eqref{eq:program-objective}-\eqref{eq:program-set-constraint}. 
\item There exists $\beta>0$ such that $\Vert \mathbf{G}(\mathbf{x}) - \mathbf{G}(\mathbf{y}) \Vert \leq \beta \Vert \mathbf{x} - \mathbf{y}\Vert$ for all $\mathbf{x}, \mathbf{y} \in \mathcal{X}$, i.e., $\mathbf{G}(\mathbf{x})$ is Lipschitz continuous with modulus $\beta$. 
\end{itemize}
\end{Assumption}

\begin{Assumption}[Existence of Lagrange multipliers]\label{as:strong-duality}~ 
There exists a Lagrange multiplier vector $\boldsymbol{\lambda}^\ast = [\lambda_1^\ast, \lambda_2^\ast, \ldots, \lambda_m^\ast]\geq \mathbf{0}$ attaining the strong duality for problem \eqref{eq:program-objective}-\eqref{eq:program-set-constraint}, i.e., 
\begin{align*}
q(\boldsymbol{\lambda}^\ast) = \min\limits_{\mathbf{x}\in \mathcal{X}}\left\{F(\mathbf{x}) : G_k(\mathbf{x}) \leq 0, \forall k\in\{1,2,\ldots,m\}\right\}, 
\end{align*}
where $q(\boldsymbol{\lambda}) = \min\limits_{\mathbf{x}\in \mathcal{X}}\{F(\mathbf{x})+ \sum_{k=1}^m \lambda_k G_k(\mathbf{x})\}
$ is the {\it Lagrangian dual function} of problem \eqref{eq:program-objective}-\eqref{eq:program-set-constraint}.
\end{Assumption}


Under \cref{as:basic} and \cref{as:strong-duality},  this paper proposes a new primal dual type algorithm which can solve convex program \eqref{eq:program-objective}-\eqref{eq:program-set-constraint} with $O(1/\epsilon)$ convergence. That is, the new algorithm only requires $O(1/\epsilon)$ iterations to achieve an $\epsilon$-approximate solution.  Furthermore, each iteration of this new algorithm can be decomposed into multiple smaller independent subproblems and hence can be implemented in parallel with low complexity even though the original convex program \eqref{eq:program-objective}-\eqref{eq:program-set-constraint} involves non-separable $f(\mathbf{x})$ and $\mathbf{g}(\mathbf{x})$.

\subsection{Example Problems}  
The general convex program \eqref{eq:program-objective}-\eqref{eq:program-set-constraint} considered in this paper includes many difficult convex programs as special cases.

\subsubsection{Large Scale Constrained Smooth Convex Programs}

If $\tilde{f}(\mathbf{x}) \equiv  0$ and $\tilde{\mathbf{g}}(\mathbf{x}) \equiv  \mathbf{0}$, then problem \eqref{eq:program-objective}-\eqref{eq:program-set-constraint} is a constrained smooth convex program.  In general,  constrained smooth convex program \eqref{eq:program-objective}-\eqref{eq:program-set-constraint} can be solved via interior point methods  (or other Newton type methods) which involve the computation of Hessians and matrix inversions at each iteration.  The associated computation complexity and memory space complexity at each iteration is between $O(n^{2})$ and $O(n^{3}$), which is prohibitive when $n$ is extremely large. For example, if $n=10^{5}$ and each floating point number uses $4$ bytes, then $40$ Gbytes of memory space is required even to save the Hessian at each iteration.  Thus, large scale convex programs are usually solved by first order gradient based methods or decomposition based methods. The primal-dual subgradient algorithm, also known as the Arrow-Hurwicz-Uzawa subgradient method, is a first order method with a slow $O(1/\epsilon^2)$ convergence time for large scale convex programs \cite{Nedic09_PrimalDualSubgradient}.

\subsubsection{Constrained Composite Convex Programs}
If $\tilde{f}(\mathbf{x})\not\equiv 0$ and/or $\tilde{\mathbf{g}}(\mathbf{x}) \not\equiv \mathbf{0}$, then problem \eqref{eq:program-objective}-\eqref{eq:program-set-constraint} is a constrained composite convex program. Due to the non-differentiability, interior points methods (or other Netwon type methods) are usually not applicable.   Such a non-smooth convex program can be solved by a mirror descent based method in \cite{Beck10ORL} with a slow $O(1/\epsilon^2)$ convergence time.  In the special case when there is only one single smooth constraint given by $g_1(\mathbf{x}) \leq 0$, i.e., $\tilde{g}_1(\mathbf{x})\equiv 0$, work \cite{Ron16MP} proposes a dual method with an $O(1/\epsilon)$ convergence time.  In the special case when $\mathbf{g}(\mathbf{x}) = \mathbf{A}\mathbf{x} - \mathbf{b}$ is linear and $\tilde{\mathbf{g}}(\mathbf{x}) \equiv \mathbf{0}$,  work \cite{ShuzhongZhang16RandomPrimalDual}  proposes a random primal-dual method that can converges to a solution whose expected error is $\epsilon$ with an $O(1/\epsilon)$ convergence time.

One representative example of constrained composite convex programs is the constrained LASSO problem from machine learning applications \cite{James12ConstrainedLasso} and financial portfolio optimization \cite{Brodie09PNAS} as follows:
\begin{align*}
\text{min}_{\mathbf{x}} \quad &\Vert \mathbf{A}\mathbf{x} - \mathbf{b}\Vert^2 + \lambda \Vert\mathbf{x}\Vert_1\\
\text{s.t.~} \quad  &  \mathbf{C}\mathbf{x} -\mathbf{d} \leq \mathbf{0}\\
& \mathbf{x}\in \mathcal{X}
\end{align*}
where $\Vert \mathbf{x}\Vert_1 = \sum_{i=1}^n |x_i|$ denotes the $l_{1}$ norm of vector $\mathbf{x}$. The constrained LASSO problem is a special case of convex program \eqref{eq:program-objective}-\eqref{eq:program-set-constraint} with $f(\mathbf{x})=\Vert \mathbf{A}\mathbf{x} - \mathbf{b}\Vert^2$, $\tilde{f}(\mathbf{x}) = \lambda \Vert \mathbf{x}\Vert_1$, $\mathbf{g}(\mathbf{x}) = \mathbf{C}\mathbf{x} - \mathbf{d}$ and $\tilde{\mathbf{g}}(\mathbf{x}) \equiv \mathbf{0}$.  In fact, many constrained optimization problems from machine learning, compressed sensing and financial portfolio optimization involve a non-smooth but separable $l_1$ norm $\Vert \mathbf{x}\Vert_1$ term in the objective or constraint functions and can hence can be cast as a special case of convex program \eqref{eq:program-objective}-\eqref{eq:program-set-constraint}.

\subsection{The Primal-Dual Subgradient Method}
The primal-dual subgradient method, also known as Arrow-Hurwicz-Uzawa subgradient method, with primal averaging is a first order method that can be applied to solve convex program \eqref{eq:program-objective}-\eqref{eq:program-set-constraint} as described in \cref{alg:primal-dual-subgradient}.  In this paper, we use $\nabla h (\mathbf{x}(t-1))$ to denote either the gradient (when $h(\cdot)$ is differentiable) or a subgradient (when $h(\cdot)$ is non-differentiable) of function $h(\mathbf{x})$ at point $\mathbf{x} = \mathbf{x}(t-1)$. 

The primal-dual subgradient method can solve constrained non-smooth convex programs. The updates of $\mathbf{x}(t)$ and $\boldsymbol{\lambda}(t)$ only involve the computation of subgradients and simple projection operations.  For large scale constrained smooth convex programs with large $n$ value, the computation of subgradients is much simpler than the computation of Hessians and matrix inversions and hence the primal dual subgradient has lower complexity computations at each iteration and is more suitable when compared with the interior point method.  However, \cref{alg:primal-dual-subgradient} is known to have a slow $O(1/\epsilon^2)$ convergence time \cite{Nedic09_PrimalDualSubgradient}.  Another drawback of \cref{alg:primal-dual-subgradient} is that its implementation requires $\lambda_k^{\max}$, which are upper bounds of each component of the Lagrange multiplier vector $\boldsymbol{\lambda}^{\ast}$ that attains the strong duality.  In practice, $\boldsymbol{\lambda}^{\ast}$ is usually unavailable.

\begin{algorithm} 
\caption{The Primal-Dual Subgradient Algorithm}
\label{alg:primal-dual-subgradient}
Let $c>0$ be a constant step size. Choose any $\mathbf{x}(0) \in \mathcal{X}$. Initialize Lagrangian multipliers $\lambda_{k}(0) = 0, \forall k\in\{1,2,\ldots, m\}$.  At each iteration $t\in\{1,2,\ldots\}$, observe $\mathbf{x}(t-1)$ and $\boldsymbol{\lambda}(t-1)$ and do the following:
\begin{itemize}
\item  Choose $\mathbf{x}(t)$ via 
\begin{align*}
\mathbf{x}(t)  =\mathcal{P}_{\mathcal{X}} \Big[ \mathbf{x}(t-1) - c\big[ \nabla F(\mathbf{x}(t-1)) +\sum_{k=1}^{m} \lambda_{k}(t-1) \nabla G_{k}(\mathbf{x}(t-1))\big]\Big] ,
\end{align*}
where $\mathcal{P}_{\mathcal{X}}[\cdot]$ is the projection onto convex set $\mathcal{X}$.
\item Update Lagrangian multipliers $\boldsymbol{\lambda}(t)$ via 
\begin{align*}
\lambda_{k}(t) = \left[ \lambda_{k}(t-1) + c G_{k}(\mathbf{x}(t-1))\right]_{0}^{\lambda_{k}^{\max}}, \forall k\in\{1,2,\ldots, m\},
\end{align*}
where $\lambda_{k}^{\max} > \lambda_{k}^{\ast}$ and $[\cdot ]_{0}^{\max}$ is the projection onto interval $[0,\lambda_{k}^{\max}]$.
\item Update the running averages $\overline{\mathbf{x}}(t)$ via
\begin{align*}
\overline{\mathbf{x}}(t+1) = \frac{1}{t+1} \sum_{\tau=0}^{t} \mathbf{x}(\tau) = \overline{\mathbf{x}}(t) \frac{t}{t+1} + \mathbf{x}(t) \frac{1}{t+1}
\end{align*}
\end{itemize}
\end{algorithm}

\subsection{The Dual Subgradient Method and Its Variations}
The classical dual subgradient algorithm is a Lagrangian dual type iterative method that can solve constrained strictly convex programs \cite{book_NonlinearProgramming_Bertsekas}. By averaging the resulting primal estimates from the classical dual subgradient algorithm, we can solve general constrained convex programs (possibly without strict convexity) with an $O(1/\epsilon^2)$ convergence time \cite{Neely05DCDIS,Nedic09,Neely14Arxiv_ConvergenceTime}. The dual subgradient algorithm with primal averaging is more suitable to separable convex programs because the updates of each component $x_{i}(t)$ are independent and parallel if both the objective function and the constraint function are separable $\mathbf{x}$.

Recently, a new Lagrangian dual type algorithm with $O(1/\epsilon)$ convergence for general convex programs is proposed in \cite{YuNeely17SIOPT}.  This algorithm can solve convex program \eqref{eq:program-objective}-\eqref{eq:program-set-constraint} following the steps described in \cref{alg:general-alg}. Similar to the dual subgradient algorithm with primal averaging, \cref{alg:general-alg} can decompose the updates of $\mathbf{x}(t)$ into smaller independent subproblems if functions $F(\mathbf{x})$ and $G_k(\mathbf{x})$ are separable. Moreover, \cref{alg:general-alg} has faster $O(1/\epsilon)$ convergence when compared with the primal-dual subgradient algorithm or the dual subgradient algorithm with primal averaging.

\begin{algorithm} 
\caption{Algorithm 1 in \cite{YuNeely17SIOPT}}
\label{alg:general-alg}
Let $\alpha>0$ be a constant parameter. Choose any $\mathbf{x}(-1) \in \mathcal{X}$. Initialize virtual queues $Q_{k}(0) = \max\{0, -G_{k}(\mathbf{x}(-1))\} , \forall k\in\{1,2,\ldots, m\}$. At each iteration $t\in\{0,1,2,\ldots\}$, observe $\mathbf{x}(t-1)$ and $\mathbf{Q}(t)$ and do the following:
\begin{itemize}
\item  Choose $\mathbf{x}(t)$ as 
\begin{align*}
\mathbf{x}(t)  =\argmin_{\mathbf{x}\in \mathcal{X}} \Big\{ F(\mathbf{x})  + [\mathbf{Q}(t) + \mathbf{G}(\mathbf{x}(t-1))]\tran \mathbf{G}(\mathbf{x}) +  \alpha \Vert \mathbf{x} - \mathbf{x}(t-1)\Vert^{2}\Big\}.
\end{align*}

\item Update virtual queue vector $\mathbf{Q}(t)$ via  \[Q_{k}(t+1) = \max\{-G_{k}(\mathbf{x}(t)), Q_{k}(t) + G_{k}(\mathbf{x}(t))\}, \forall k\in\{1,2,\ldots, m\}.\]
\item Update the running averages $\overline{\mathbf{x}}(t)$ via \[\overline{\mathbf{x}}(t+1) = \frac{1}{t+1}\sum_{\tau=0}^{t} \mathbf{x}(\tau) = \overline{\mathbf{x}}(t) \frac{t}{t+1} + \mathbf{x}(t) \frac{1}{t+1}.\]
\end{itemize}
\end{algorithm}

In this paper, however, objective function $F(\mathbf{x})$ involves possibly non-separable $f(\mathbf{x})$ and each constraint function $G_k(\mathbf{x})$ involves possibly non-separable $g_{k}(\mathbf{x})$. As a result, the update of $\mathbf{x}(t)$ is not decomposable and requires to solve a set constrained non-smooth strongly convex program which is typically solved via a subgradient based method. However, the subgradient based method for set constrained convex programs is an iterative technique and involves at least one projection operation at each iteration.  To obtain an $\epsilon$-approximate solution to the set constrained convex program, the projected subgradient method in general requires $O(1/\epsilon^2)$ iterations and can be improved to only require $O(1/{\epsilon})$ or $O(1/\sqrt{\epsilon})$ iterations for certain special problems \cite{book_ConvexOpt_Nesterov,Beck2009FISTA}.

\subsection{New Algorithm}
In this paper, we propose a new primal-dual type algorithm to solve convex program \eqref{eq:program-objective}-\eqref{eq:program-set-constraint} as described in \cref{alg:new-alg}. The new algorithm uses the same virtual queue update as in \cref{alg:general-alg}, however, the update of $\mathbf{x}(t)$ is fundamentally different. The modification enable us to update each component of $\mathbf{x}$ in parallel even if $f(\mathbf{x})$ or each $g_{k}(\mathbf{x})$ is non-separable. Later, we will further show that the $O(1/\epsilon)$ convergence of \cref{alg:general-alg} is preserved in the new algorithm.

\begin{algorithm} 
\caption{New Algorithm}
\label{alg:new-alg}
Let $\{\alpha(t), t\geq0\}$ be a sequence of positive algorithm parameters (defined in Section \ref{sec:convergence-time}). Choose any $\mathbf{x}(-1) \in \mathcal{X}$. Initialize virtual queues $Q_{k}(0) = \max\{0, -G_{k}(\mathbf{x}(-1))\} , \forall k\in\{1,2,\ldots, m\}$. At each iteration $t\in\{0,1,\ldots\}$, observe $\mathbf{x}(t-1)$ and $\mathbf{Q}(t)$ and do the following:
\begin{itemize}
\item
Choose $\mathbf{x}(t)$ to solve  $\min_{\mathbf{x}\in \mathcal{X}} \Big\{ [\nabla f(\mathbf{x}(t-1))]\tran \mathbf{x}  + \tilde{f}(\mathbf{x}) + \sum_{k=1}^m \big[Q_k(t) + G_k(\mathbf{x}(t-1))\big]\big[[\nabla g_k(\mathbf{x}(t-1))]\tran \mathbf{x} + \tilde{g}_k(\mathbf{x})\big]  +  \alpha(t) \Vert \mathbf{x} - \mathbf{x}(t-1)\Vert^{2}\Big\}$.

\item Update virtual queue vector $\mathbf{Q}(t)$ via  \[Q_{k}(t+1) = \max\{-G_{k}(\mathbf{x}(t)), Q_{k}(t) + G_{k}(\mathbf{x}(t))\}, \forall k\in\{1,2,\ldots, m\}.\]
\item Update the running averages $\overline{\mathbf{x}}(t)$ via \[\overline{\mathbf{x}}(t+1) = \frac{1}{t+1}\sum_{\tau=0}^{t} \mathbf{x}(\tau) = \overline{\mathbf{x}}(t) \frac{t}{t+1} + \mathbf{x}(t) \frac{1}{t+1}.\]
\end{itemize}
\end{algorithm}

At each iteration $t\in\{0,1,2,\ldots\}$, $Q_k(t) + G_k(\mathbf{x}(t-1)), \forall k\in\{1,2,\ldots,m\}$ are given constants.  Note that $\nabla f(\mathbf{x}(t-1))]\tran \mathbf{x} + \sum_{k=1}^m \big[Q_k(t) + G_k(\mathbf{x}(t-1))\big] \big[[\nabla g_k(\mathbf{x}(t-1))]\tran \mathbf{x}\big]$ is a linear function and hence is separable; $\tilde{f}(\mathbf{x}) +  \sum_{k=1}^m [Q_k(t) + G_k(\mathbf{x}(t-1))] \tilde{g}_k(\mathbf{x})$ is separable by assumption; and $\alpha(t) \Vert \mathbf{x}-\mathbf{x}(t-1)\Vert^2$ is also separable. Thus, the update of $\mathbf{x}(t)$ requires to minimize a separable convex function. It follows that each component of $\mathbf{x}(t)$ can be updated independently by solving a scalar convex program.  Thus, each iteration of \cref{alg:new-alg} is parallel and has low complexity.

The next lemma shows that if $\tilde{f}(\mathbf{x})$ and each $\tilde{g}_{k}(\mathbf{x})$ are $l_{1}$ norms, then the $\mathbf{x}(t)$ update in \cref{alg:new-alg} has a closed-form update equation for each coordinate. 

\begin{Lem}\label{lm:l1-norm-closed-form}
If $\tilde{f}(\mathbf{x})= c_{0} \Vert \mathbf{x}\Vert_{1}$, $\tilde{g}_{k}(\mathbf{x}) = c_{k} \Vert \mathbf{x}\Vert_{1}, \forall k\in\{1,2,\ldots,m\}$ and $\mathcal{X} = \prod_{i=1}^{n}[x_{i}^{\min}, x_{i}^{\max}]$, then the following holds:
\begin{enumerate}
\item The update of $\mathbf{x}(t)$ in \cref{alg:new-alg} can be decomposed into $n$ scalar convex programs and each $x_{i}(t), i\in\{1,2,\ldots,n\}$ is the solution to a scalar convex program given by
\begin{align}
\min_{x_{i}\in [x_i^{\min}, x_i^{\max}]} \Big\{ \alpha [x_{i} - x_{i}(t-1)]^{2} +d_{i} x_{i} +  e_{i}|x_{i}| \Big\}, \label{eq:lm-l1-norm-min-scalar-problem}
\end{align}
where $\alpha=\alpha(t)>0$, $d_{i} = \frac{\partial f(\mathbf{x}(t-1))}{\partial x_{i}}  + \sum_{k=1}^{m} [Q_{k}(t) + G_{k}(\mathbf{x}(t-1))] \frac{\partial g_{k}(\mathbf{x}(t-1))}{\partial x_{i}}$ and $e_{i} = c_{0} + \sum_{k=1}^{m} [Q_{k}(t) + G_{k}(\mathbf{x}(t-1))] c_{k}$ are constants. Note that we use $\frac{\partial \phi(\mathbf{x}(t-1))}{\partial x_{i}}$ to denote the partial gradient of $\phi(\mathbf{x})$ with respect to the $i$-th component $x_{i}$ at point $\mathbf{x} = \mathbf{x}(t-1)$. 
\item Scalar convex program \eqref{eq:lm-l1-norm-min-scalar-problem} has a closed-form solution given by 
\begin{align*}
x_{i}^{\ast} = \left\{\begin{array}{cl} \big[x_{i}(t-1) - \frac{d_{i}}{2\alpha}- \frac{e_{i}}{2\alpha}\big]_{x_i^{\min}}^{x_i^{\max}},&\text{if}~ x_{i}(t-1) - \frac{d_{i}}{2\alpha} > \frac{e_{i}}{2\alpha},\\   \big[x_{i}(t-1) - \frac{d_{i}}{2\alpha}+ \frac{e_{i}}{2\alpha}\big]_{x_i^{\min}}^{x_i^{\max}} ,&\text{if}~ x_{i}(t-1) - \frac{d_{i}}{2\alpha} < -\frac{e_{i}}{2\alpha}, \\ \big[0\big]_{x_i^{\min}}^{x_i^{\max}}, &\text{else}.\end{array}\right.
\end{align*} 
where $[\cdot]_a^b$ is the projection onto the interval $[a,b]$.
\end{enumerate}
\end{Lem}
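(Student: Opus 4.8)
The plan is to establish the two claims in sequence, both essentially by direct computation.

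\textbf{Part 1 (separability).} I would substitute $\tilde f(\mathbf x)= c_{0}\sum_{i=1}^{n}|x_{i}|$, $\tilde g_{k}(\mathbf x)= c_{k}\sum_{i=1}^{n}|x_{i}|$ and $\mathcal X = \prod_{i=1}^{n}[x_{i}^{\min},x_{i}^{\max}]$ directly into the $\mathbf x(t)$-subproblem of \cref{alg:new-alg}. Since $[\nabla f(\mathbf x(t-1))]\tran\mathbf x = \sum_{i}\frac{\partial f(\mathbf x(t-1))}{\partial x_{i}}x_{i}$, likewise for each $[\nabla g_{k}(\mathbf x(t-1))]\tran\mathbf x$, and $\alpha(t)\Vert\mathbf x-\mathbf x(t-1)\Vert^{2} = \sum_{i}\alpha(t)[x_{i}-x_{i}(t-1)]^{2}$, the objective collapses to $\sum_{i=1}^{n}\phi_{i}(x_{i})$ with $\phi_{i}(x_{i}) = \alpha(t)[x_{i}-x_{i}(t-1)]^{2} + d_{i}x_{i} + e_{i}|x_{i}|$ and $d_{i},e_{i}$ as defined. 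Because the feasible set is the Cartesian product of the intervals $[x_{i}^{\min},x_{i}^{\max}]$, minimizing a sum of functions of distinct coordinates over it is equivalent to minimizing each $\phi_{i}$ over $[x_{i}^{\min},x_{i}^{\max}]$ separately, which is exactly \eqref{eq:lm-l1-norm-min-scalar-problem}. At this point I would also record that $e_{i}\ge 0$: convexity of $\tilde f$ and of each $\tilde g_{k}$ forces $c_{0}\ge 0$ and $c_{k}\ge 0$, while the virtual queue update gives $Q_{k}(t) = \max\{-G_{k}(\mathbf x(t-1)),\,Q_{k}(t-1)+G_{k}(\mathbf x(t-1))\}\ge -G_{k}(\mathbf x(t-1))$ for $t\ge 1$ and $Q_{k}(0)=\max\{0,-G_{k}(\mathbf x(-1))\}\ge -G_{k}(\mathbf x(-1))$, so $Q_{k}(t)+G_{k}(\mathbf x(t-1))\ge 0$ for all $t$; hence each $\phi_{i}$ is convex.

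\textbf{Part 2 (closed form).} Fix $i$ and write $\psi(x)=\alpha(x-x_{0})^{2}+dx+e|x|$ with $x_{0}=x_{i}(t-1)$, $a=x_{i}^{\min}$, $b=x_{i}^{\max}$, $d=d_{i}$, $e=e_{i}\ge 0$; then $\psi$ is convex on $\mathbb R$. First I would locate the unconstrained minimizer $\hat x$ over $\mathbb R$ by the sign of $x$. On $\{x\ge 0\}$, $\psi(x)=\alpha(x-x_{0})^{2}+(d+e)x$ is a parabola with vertex $x_{0}-\frac{d}{2\alpha}-\frac{e}{2\alpha}$; if this is strictly positive, i.e. $x_{0}-\frac{d}{2\alpha}>\frac{e}{2\alpha}$, then $\psi$ is differentiable there with vanishing derivative, so by convexity it is the global minimizer. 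Symmetrically, on $\{x\le 0\}$, $\psi(x)=\alpha(x-x_{0})^{2}+(d-e)x$ has vertex $x_{0}-\frac{d}{2\alpha}+\frac{e}{2\alpha}$, which is the global minimizer when it is strictly negative, i.e. $x_{0}-\frac{d}{2\alpha}<-\frac{e}{2\alpha}$. In the remaining case $-\frac{e}{2\alpha}\le x_{0}-\frac{d}{2\alpha}\le\frac{e}{2\alpha}$, I compute $\partial\psi(0)=\{d-2\alpha x_{0}\}+e[-1,1]=[\,d-2\alpha x_{0}-e,\ d-2\alpha x_{0}+e\,]$, and the two displayed inequalities are precisely the statement $0\in\partial\psi(0)$, so $\hat x=0$. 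These three cases exhaust all possible values of $x_{0}-\frac{d}{2\alpha}$, so $\hat x$ is pinned down in every case. Finally I would invoke the elementary fact that for a convex $\psi$ on $\mathbb R$ with unconstrained minimizer $\hat x$, the minimizer over $[a,b]$ is $\mathcal P_{[a,b]}[\hat x]=[\hat x]_{a}^{b}$ (since $\psi$ is nonincreasing on $(-\infty,\hat x]$ and nondecreasing on $[\hat x,\infty)$, the best feasible point is the point of $[a,b]$ nearest $\hat x$); applying this to the three expressions for $\hat x$ yields exactly the claimed formula for $x_{i}^{\ast}$.

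\textbf{Main obstacle.} Almost everything here is routine one-variable calculus and bookkeeping; the only delicate point is the ``else'' branch, where the candidate minimizer $x=0$ is a nonsmooth point of $\psi$, so one must argue through $0\in\partial\psi(0)$ rather than a derivative, and verify that the three listed conditions are genuinely mutually exclusive and exhaustive and consistent with the sign case-split used to derive the first two branch formulas. The secondary thing to be careful about is confirming $e_{i}\ge 0$, which is implicitly needed both to make $\phi_{i}$ convex and to guarantee that the two parabola-branch vertices are ordered in the way the case analysis assumes.
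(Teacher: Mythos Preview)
Your proposal is correct and follows essentially the same route as the paper's proof, which is itself just a two-line sketch: part 1 ``follows trivially by combining terms of each component $x_i$'' and part 2 follows ``by recalling that the subgradient of $|x|$ is $1$ when $x>0$; is $-1$ when $x<0$; and is the interval $[-1,1]$ when $x=0$'' and then ``considering different ranges of parameters.'' You have simply written out what the paper leaves to the reader, including the explicit verification that $e_i\ge 0$ (via part (2) of \cref{lm:virtual-queue} and the convexity hypotheses on $\tilde f,\tilde g_k$), which the paper uses tacitly but never states.
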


\begin{proof}~
\begin{enumerate}
\item The first part follows trivially by combining terms of each component $x_{i}$ in the convex program with vector variable $\mathbf{x}$.
\item The second part follows by recalling that the subgradient of $|x|$ is $1$ when $x>0$; is $-1$ when $x<0$; and is the interval $[-1,1]$ when $x=0$. The closed-form solution is obtained by considering different ranges of parameters.
\end{enumerate}
\end{proof}

The next lemma summarizes that if $\tilde{f}(\mathbf{x})\equiv 0$ and $\tilde{\mathbf{g}}(\mathbf{x})\equiv \mathbf{0}$, i.e., problem \eqref{eq:program-objective}-\eqref{eq:program-set-constraint} is a constrained smooth convex program, then $\mathbf{x}(t)$ update in \cref{alg:new-alg} follows a simple projected gradient update, which is parallel for each component as long as $\mathcal{X}$ is a Cartesian product.

\begin{Lem} \label{lm:x-update-proj-implementation}
If $\tilde{f}(\mathbf{x})\equiv 0$ and $\tilde{\mathbf{g}}(\mathbf{x})\equiv \mathbf{0}$, then the update of $\mathbf{x}(t)$ in \cref{alg:new-alg} is given by
\begin{align*}
\mathbf{x}(t)  =\mathcal{P}_{\mathcal{X}}\Big[ \mathbf{x}(t-1) -\frac{1}{2\alpha(t)} \mathbf{d}(t) \Big]
\end{align*}
where $\mathbf{d}(t) = \nabla f(\mathbf{x}(t-1)) + \sum_{k=1}^{m} [Q_{k}(t) + g_{k}(\mathbf{x}(t-1))] \nabla g_{k}(\mathbf{x}(t-1))$ and $\mathcal{P}_{\mathcal{X}}[\cdot]$ is the projection onto convex set $\mathcal{X}$.
\end{Lem}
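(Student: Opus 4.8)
The plan is to specialize the $\mathbf{x}(t)$-update of \cref{alg:new-alg} to the case $\tilde{f}\equiv 0$, $\tilde{\mathbf{g}}\equiv\mathbf{0}$ and then identify the resulting minimization with a Euclidean projection via a complete-the-square identity. First I would note that $\tilde{g}_k\equiv 0$ gives $G_k(\mathbf{x}(t-1)) = g_k(\mathbf{x}(t-1))$, so the function minimized over $\mathbf{x}\in\mathcal{X}$ in \cref{alg:new-alg} reduces to
\begin{align*}
[\nabla f(\mathbf{x}(t-1))]\tran \mathbf{x} + \sum_{k=1}^{m} \big[Q_k(t) + g_k(\mathbf{x}(t-1))\big] [\nabla g_k(\mathbf{x}(t-1))]\tran \mathbf{x} + \alpha(t) \Vert \mathbf{x} - \mathbf{x}(t-1)\Vert^{2},
\end{align*}
which, collecting the linear terms, is exactly $\mathbf{d}(t)\tran \mathbf{x} + \alpha(t)\Vert \mathbf{x} - \mathbf{x}(t-1)\Vert^2$ with $\mathbf{d}(t)$ as defined in the statement.

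Next I would complete the square. For every $\mathbf{x}$,
\begin{align*}
\alpha(t)\Vert \mathbf{x} - \mathbf{x}(t-1)\Vert^2 + \mathbf{d}(t)\tran \mathbf{x} = \alpha(t) \Big\Vert \mathbf{x} - \Big(\mathbf{x}(t-1) - \frac{1}{2\alpha(t)} \mathbf{d}(t)\Big) \Big\Vert^2 + C(t),
\end{align*}
where $C(t) = \mathbf{d}(t)\tran \mathbf{x}(t-1) - \frac{1}{4\alpha(t)}\Vert \mathbf{d}(t)\Vert^2$ is independent of $\mathbf{x}$ (verified by expanding both sides and matching the terms linear in $\mathbf{x}$). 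Since $\alpha(t)>0$ and $C(t)$ does not depend on $\mathbf{x}$, minimizing the left side over $\mathbf{x}\in\mathcal{X}$ is equivalent to minimizing $\Vert \mathbf{x} - (\mathbf{x}(t-1) - \frac{1}{2\alpha(t)}\mathbf{d}(t))\Vert^2$ over $\mathbf{x}\in\mathcal{X}$; by the definition of the Euclidean projection onto the closed convex set $\mathcal{X}$, the unique minimizer of the latter is $\mathcal{P}_{\mathcal{X}}[\mathbf{x}(t-1) - \frac{1}{2\alpha(t)}\mathbf{d}(t)]$, which is precisely the claimed update. Strong convexity of the original objective (from the $\alpha(t)\Vert\cdot\Vert^2$ term with $\alpha(t)>0$) together with closedness and convexity of $\mathcal{X}$ also shows the $\argmin$ in \cref{alg:new-alg} is attained and unique, so the update is well-defined.

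I do not expect any genuine obstacle here: the whole argument is the standard reduction of a linear-plus-quadratic-penalty minimization to a projection. The only points requiring care are the algebraic completing-the-square identity and the observation that discarding the $\mathbf{x}$-independent constant $C(t)$ leaves the minimizer unchanged. The per-coordinate parallelism claim for $\mathcal{X}$ a Cartesian product is then immediate, since the Euclidean projection onto a product set is the product of the coordinatewise projections.
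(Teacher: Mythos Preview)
Your proposal is correct and is essentially the same argument as the paper's, just run in the opposite direction: the paper starts from $\mathcal{P}_{\mathcal{X}}[\mathbf{x}(t-1)-\frac{1}{2\alpha(t)}\mathbf{d}(t)]$, expands the squared distance, drops $\mathbf{x}$-independent constants, and matches the result to the \cref{alg:new-alg} update, whereas you start from the update and complete the square to arrive at the projection. The underlying algebraic identity and the ``constants don't affect the $\argmin$'' step are identical in both.
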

\begin{proof}
The projection operator can be reinterpreted as an optimization problem as follows:
\begin{align}
\mathbf{x}(t) =&  \mathcal{P}_{\mathcal{X}} \Big[ \mathbf{x}(t-1) - \frac{1}{2\alpha(t)} \mathbf{d}(t)\Big] \nonumber\\
\overset{(a)}{=}& \argmin_{\mathbf{x}\in \mathcal{X}} \Big[ \big\Vert \mathbf{x} - [\mathbf{x}(t-1)- \frac{1}{2\alpha(t)} \mathbf{d}(t)] \big\Vert^{2}\Big] \nonumber\\
=&\argmin_{\mathbf{x}\in \mathcal{X}} \Big[ \Vert \mathbf{x} -\mathbf{x}(t-1) \Vert^{2} + \frac{1}{\alpha(t)}\mathbf{d}\tran(t)[\mathbf{x} -\mathbf{x}(t-1)] + \frac{1}{4\alpha^{2}(t)} \Vert \mathbf{d}(t) \Vert^{2}\Big] \nonumber\\
\overset{(b)}{=}& \argmin_{\mathbf{x}\in \mathcal{X}} \left[ \mathbf{d}\tran(t)\mathbf{x}+  \alpha(t)\Vert \mathbf{x} -\mathbf{x}(t-1) \Vert^{2} \right] \label{eq:lm-projection-update-smooth-opt},
\end{align}
where (a) follows from the definition of the projection onto a convex set; and (b) follows from the fact the minimizing solution does not change when we remove constant term $-\frac{1}{\alpha(t)} \mathbf{d}\tran(t) \mathbf{x}(t-1) + \frac{1}{4\alpha^{2}(t)} \Vert \mathbf{d}(t)\Vert^{2}$ and multiply positive constant $\alpha(t)$ in the objective function.

Recall that $\mathbf{d}(t) = \nabla f(\mathbf{x}(t-1)) + \sum_{k=1}^{m} [Q_{k}(t) + g_{k}(\mathbf{x}(t-1))] \nabla g_{k}(\mathbf{x}(t-1))$. This lemma follows because \eqref{eq:lm-projection-update-smooth-opt} is identical to the update of $\mathbf{x}(t)$ in \cref{alg:new-alg} when $\tilde{f}(\mathbf{x})\equiv 0$ and $\tilde{\mathbf{g}}(\mathbf{x})\equiv \mathbf{0}$. 
\end{proof}

For constrained smooth convex programs, \cref{lm:x-update-proj-implementation} suggests that \cref{alg:new-alg} has a similar per-iteration complexity when compared with \cref{alg:primal-dual-subgradient}. However, \cref{alg:new-alg} can be more easily implemented since it does not require any upper bound of $\boldsymbol{\lambda}^{\ast}$ as required by \cref{alg:primal-dual-subgradient}. Moreover, we shall show that \cref{alg:general-alg} has   faster $O(1/\epsilon)$ convergence in comparison with the slow $O(1/\epsilon^2)$ convergence of \cref{alg:primal-dual-subgradient}.

\section{Preliminaries and Basis Analysis}

This section presents useful preliminaries on convex analysis and important facts of \cref{alg:new-alg}.

\subsection{Preliminaries}

\begin{Def}[Lipschitz Continuity] \label{def:Lipschitz-continuous}
Let $\mathcal{X} \subseteq \mathbb{R}^n$ be a convex set. Function $\phi: \mathcal{X}\rightarrow \mathbb{R}^m$ is said to be Lipschitz continuous  on $\mathcal{X}$ with modulus $L$ if there exists $L> 0$ such that $\Vert \phi(\mathbf{y}) - \phi(\mathbf{x}) \Vert \leq L \Vert\mathbf{y} - \mathbf{x}\Vert$  for all $ \mathbf{x}, \mathbf{y} \in \mathcal{X}$. 
\end{Def}

\begin{Def}[Smooth Functions]
Let $\mathcal{X} \subseteq \mathbb{R}^n$ and function $\phi(\mathbf{x})$ be continuously differentiable on $\mathcal{X}$. Function $\phi(\mathbf{x})$ is said to be smooth on $\mathcal{X}$ with modulus $L$ if $\nabla \phi(\mathbf{x})$ is Lipschitz continuous on $\mathcal{X}$ with modulus $L$.
\end{Def}

Note that linear function $\phi(\mathbf{x}) = \mathbf{a}^T \mathbf{x}$ is smooth with modulus $0$.  If a function $\phi(\mathbf{x})$ is smooth with modulus $L$, then $c \phi(\mathbf{x})$ is smooth with modulus $cL$ for any $c>0$.

\begin{Lem}[Descent Lemma, Proposition A.24 in \cite{book_NonlinearProgramming_Bertsekas}] \label{lm:descent-lemma} If $h$ is smooth on $\mathcal{X}$ with modulus $L$, then $\phi(\mathbf{y}) \leq \phi(\mathbf{x}) + \nabla \phi(\mathbf{x})^T (\mathbf{y} - \mathbf{x}) + \frac{L}{2} \Vert \mathbf{y} - \mathbf{x}\Vert^2$ for all $\mathbf{x}, \mathbf{y} \in \mathcal{X}$.
\end{Lem}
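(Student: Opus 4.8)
The plan is to reduce this multivariate inequality to a one-dimensional integration along the segment joining $\mathbf{x}$ and $\mathbf{y}$, and then to invoke the Lipschitz continuity of $\nabla\phi$ pointwise under the integral. First I would fix $\mathbf{x},\mathbf{y}\in\mathcal{X}$ and observe that, since $\mathcal{X}$ is convex, the point $\mathbf{x}+t(\mathbf{y}-\mathbf{x})$ lies in $\mathcal{X}$ for every $t\in[0,1]$; hence the scalar function $\psi(t)\overset{\Delta}{=}\phi\big(\mathbf{x}+t(\mathbf{y}-\mathbf{x})\big)$ is well defined on $[0,1]$. Because $\phi$ is continuously differentiable on $\mathcal{X}$, the chain rule gives $\psi'(t)=\nabla\phi\big(\mathbf{x}+t(\mathbf{y}-\mathbf{x})\big)\tran(\mathbf{y}-\mathbf{x})$, which is continuous in $t$, so the fundamental theorem of calculus applies.

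Next I would write
\[
\phi(\mathbf{y})-\phi(\mathbf{x}) = \psi(1)-\psi(0) = \int_0^1 \nabla\phi\big(\mathbf{x}+t(\mathbf{y}-\mathbf{x})\big)\tran(\mathbf{y}-\mathbf{x})\, dt ,
\]
and then subtract the constant $\nabla\phi(\mathbf{x})\tran(\mathbf{y}-\mathbf{x}) = \int_0^1 \nabla\phi(\mathbf{x})\tran(\mathbf{y}-\mathbf{x})\, dt$ from both sides to obtain
\[
\phi(\mathbf{y})-\phi(\mathbf{x})-\nabla\phi(\mathbf{x})\tran(\mathbf{y}-\mathbf{x}) = \int_0^1 \big[\nabla\phi\big(\mathbf{x}+t(\mathbf{y}-\mathbf{x})\big)-\nabla\phi(\mathbf{x})\big]\tran(\mathbf{y}-\mathbf{x})\, dt .
\]
The final step is to bound the integrand: apply the Cauchy--Schwarz inequality and then the smoothness hypothesis in the form $\Vert\nabla\phi(\mathbf{x}+t(\mathbf{y}-\mathbf{x}))-\nabla\phi(\mathbf{x})\Vert\le L\,\Vert t(\mathbf{y}-\mathbf{x})\Vert = L\,t\,\Vert\mathbf{y}-\mathbf{x}\Vert$, which yields
\[
\phi(\mathbf{y})-\phi(\mathbf{x})-\nabla\phi(\mathbf{x})\tran(\mathbf{y}-\mathbf{x}) \le \int_0^1 L\,t\,\Vert\mathbf{y}-\mathbf{x}\Vert^2\, dt = \frac{L}{2}\Vert\mathbf{y}-\mathbf{x}\Vert^2 ,
\]
which is exactly the claimed bound.

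I do not anticipate a genuine obstacle here: the argument is entirely standard (it is Proposition A.24 in \cite{book_NonlinearProgramming_Bertsekas}, and one could simply cite it). The only two points I would be careful to state explicitly are (i) the use of convexity of $\mathcal{X}$, which is what guarantees that the whole segment $[\mathbf{x},\mathbf{y}]$ stays inside the domain on which $\nabla\phi$ is defined and Lipschitz, and (ii) the validity of the fundamental theorem of calculus for $\psi$, which rests on the assumed \emph{continuous} differentiability of $\phi$ so that $\psi'$ is continuous and integrable on $[0,1]$.
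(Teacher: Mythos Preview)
Your argument is correct and is precisely the standard proof of the Descent Lemma. The paper itself does not prove this statement at all; it merely quotes it as Proposition~A.24 of \cite{book_NonlinearProgramming_Bertsekas}, and what you have written is essentially the proof one finds there.
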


\begin{Def}[Strongly Convex Functions]
 Let $\mathcal{X} \subseteq \mathbb{R}^n$ be a convex set. Function $\phi$ is said to be strongly convex on $\mathcal{X}$ with modulus $\alpha$ if there exists a constant $\alpha>0$ such that $\phi(\mathbf{x}) - \frac{1}{2} \alpha \Vert \mathbf{x} \Vert^2$ is convex on $\mathcal{X}$.
\end{Def}

By the definition of strongly convex functions, it is easy to show that if $\phi(\mathbf{x})$ is convex and $\alpha>0$, then $\phi(\mathbf{x}) + \alpha \Vert \mathbf{x} - \mathbf{x}_0\Vert^2$ is strongly convex with modulus $2\alpha$ for any constant $\mathbf{x}_0$.

\begin{Lem} [Corollary 1 in \cite{YuNeely17SIOPT}] \label{lm:strong-convex-quadratic-optimality}
Let $\mathcal{X} \subseteq \mathbb{R}^{n}$ be a convex set. Let function $\phi$ be strongly convex on $\mathcal{X}$ with modulus $\alpha$ and $\mathbf{x}^{opt}$ be a global minimum of $h$ on $\mathcal{X}$. Then, $\phi(\mathbf{x}^{opt}) \leq \phi(\mathbf{x}) - \frac{\alpha}{2} \Vert \mathbf{x}^{opt} - \mathbf{x}\Vert^{2}$ for all $\mathbf{x}\in \mathcal{X}$.
\end{Lem}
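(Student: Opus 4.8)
The plan is to prove this directly from the definition of strong convexity, deliberately \emph{without} assuming $\phi$ is differentiable (the hypotheses only give strong convexity, not smoothness). First I would record the quantitative midpoint form of strong convexity: since $\phi(\mathbf{x}) - \frac{\alpha}{2}\Vert\mathbf{x}\Vert^2$ is convex on $\mathcal{X}$, applying the ordinary convexity inequality to this function and then using the elementary identity $\theta\Vert\mathbf{x}\Vert^2 + (1-\theta)\Vert\mathbf{y}\Vert^2 - \Vert\theta\mathbf{x}+(1-\theta)\mathbf{y}\Vert^2 = \theta(1-\theta)\Vert\mathbf{x}-\mathbf{y}\Vert^2$ yields, for all $\mathbf{x},\mathbf{y}\in\mathcal{X}$ and $\theta\in[0,1]$,
\[
\phi\big(\theta\mathbf{x} + (1-\theta)\mathbf{y}\big) \leq \theta\phi(\mathbf{x}) + (1-\theta)\phi(\mathbf{y}) - \frac{\alpha}{2}\theta(1-\theta)\Vert\mathbf{x}-\mathbf{y}\Vert^2 .
\]

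Next I would fix an arbitrary $\mathbf{x}\in\mathcal{X}$, specialize $\mathbf{y} = \mathbf{x}^{opt}$, and use that $\mathbf{x}^{opt}$ globally minimizes $\phi$ over $\mathcal{X}$: by convexity of $\mathcal{X}$ the point $\theta\mathbf{x} + (1-\theta)\mathbf{x}^{opt}$ lies in $\mathcal{X}$, so its $\phi$-value is at least $\phi(\mathbf{x}^{opt})$. Substituting this lower bound into the left-hand side of the displayed inequality and rearranging gives, for every $\theta\in(0,1]$,
\[
\theta\phi(\mathbf{x}^{opt}) \leq \theta\phi(\mathbf{x}) - \frac{\alpha}{2}\theta(1-\theta)\Vert\mathbf{x}-\mathbf{x}^{opt}\Vert^2 .
\]
Dividing through by $\theta>0$ leaves $\phi(\mathbf{x}^{opt}) \leq \phi(\mathbf{x}) - \frac{\alpha}{2}(1-\theta)\Vert\mathbf{x}-\mathbf{x}^{opt}\Vert^2$, and letting $\theta\downarrow 0$ produces the claimed inequality $\phi(\mathbf{x}^{opt}) \leq \phi(\mathbf{x}) - \frac{\alpha}{2}\Vert\mathbf{x}-\mathbf{x}^{opt}\Vert^2$.

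There is no genuinely hard step here; the only things to watch are (i) not differentiating $\phi$, since smoothness is not assumed, and (ii) noticing that a naive choice like $\theta=1$ recovers only the trivial bound $\phi(\mathbf{x}^{opt})\le\phi(\mathbf{x})$ — the full constant $\frac{\alpha}{2}$ emerges only in the limit $\theta\to 0$. (If one did wish to assume differentiability, the result would be immediate from the variational inequality $\nabla\phi(\mathbf{x}^{opt})\tran(\mathbf{x}-\mathbf{x}^{opt})\ge 0$ together with the strong-convexity subgradient inequality $\phi(\mathbf{x})\ge \phi(\mathbf{x}^{opt}) + \nabla\phi(\mathbf{x}^{opt})\tran(\mathbf{x}-\mathbf{x}^{opt}) + \frac{\alpha}{2}\Vert\mathbf{x}-\mathbf{x}^{opt}\Vert^2$; but the limiting argument above avoids any regularity hypothesis.)
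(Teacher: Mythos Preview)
Your proof is correct. The paper does not supply its own argument for this lemma; it merely quotes the result as Corollary~1 of \cite{YuNeely17SIOPT}, so there is no in-paper proof to compare against. Your derivation via the strengthened Jensen inequality for strongly convex functions followed by the limit $\theta\downarrow 0$ is clean and, as you note, avoids any differentiability hypothesis on $\phi$; the alternative route you mention (first-order optimality plus the strong-convexity subgradient inequality) is the one more commonly seen in the literature and is likely closer to what the cited reference does, but both are standard and equally valid.
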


\subsection{Properties of the Virtual Queue Vector and the Drift}

The following preliminary results (Lemmas \ref{lm:virtual-queue}-\ref{lm:queue-constraint-inequality}) on virtual queue vector $\mathbf{Q}(t)$ and its drift are proven for \cref{alg:general-alg} in \cite{YuNeely17SIOPT} and hold regardless of the update of $\mathbf{x}(t)$.  Since  \cref{alg:new-alg} has the same update equation of $\mathbf{Q}(t)$,  these results also hold for \cref{alg:new-alg}.

\begin{Lem} [Lemma 3 in \cite{YuNeely17SIOPT}]\label{lm:virtual-queue} In \cref{alg:new-alg}, we have
\begin{enumerate}
\item At each iteration $t\in\{0,1,2,\ldots\}$, $Q_k(t)\geq 0$ for all $k\in\{1,2,\ldots,m\}$.
\item At each iteration $t\in\{0,1,2,\ldots\}$, $Q_{k}(t) + G_{k}(\mathbf{x}(t-1))\geq 0$ for all $k\in\{1,2\ldots, m\}$.
\item At iteration $t=0$, $\Vert \mathbf{Q}(0)\Vert^2 \leq \Vert \mathbf{G}(\mathbf{x}(-1))\Vert^2$. At each iteration $t\in\{1,2,\ldots\}$,  $\Vert \mathbf{Q}(t)\Vert^2 \geq \Vert \mathbf{G}(\mathbf{x}(t-1))\Vert^2$.
\end{enumerate}
\end{Lem}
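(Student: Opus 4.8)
The plan is to prove the three parts jointly by induction on $t$, using only the virtual queue recursion $Q_{k}(t+1) = \max\{-G_{k}(\mathbf{x}(t)), Q_{k}(t) + G_{k}(\mathbf{x}(t))\}$ and the initialization $Q_{k}(0) = \max\{0, -G_{k}(\mathbf{x}(-1))\}$. All of these relations are scalar in the index $k$, so I would argue coordinatewise for each $k\in\{1,\ldots,m\}$ and then sum over $k$ at the very end to obtain the $l_2$-norm statements in part 3. The natural order within the induction is: first establish part 1, then part 2, then part 3, since the argument for part 3 at index $t$ will invoke the nonnegativity from part 1 at index $t-1$.

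For part 1, the base case $Q_{k}(0) = \max\{0, -G_{k}(\mathbf{x}(-1))\}\geq 0$ is immediate. For the inductive step, assume $Q_{k}(t)\geq 0$; the recursion gives both $Q_{k}(t+1) \geq -G_{k}(\mathbf{x}(t))$ and $Q_{k}(t+1)\geq Q_{k}(t) + G_{k}(\mathbf{x}(t))$, and averaging these two inequalities yields $Q_{k}(t+1)\geq \tfrac{1}{2}Q_{k}(t)\geq 0$, closing the induction. For part 2, at $t=0$ one computes $Q_{k}(0) + G_{k}(\mathbf{x}(-1)) = \max\{0,-G_{k}(\mathbf{x}(-1))\} + G_{k}(\mathbf{x}(-1)) = \max\{G_{k}(\mathbf{x}(-1)),0\}\geq 0$; for $t\geq 1$ the recursion gives directly $Q_{k}(t) = \max\{-G_{k}(\mathbf{x}(t-1)), Q_{k}(t-1)+G_{k}(\mathbf{x}(t-1))\}\geq -G_{k}(\mathbf{x}(t-1))$, hence $Q_{k}(t)+G_{k}(\mathbf{x}(t-1))\geq 0$.

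For part 3, the $t=0$ claim is a direct case check on the sign of $G_{k}(\mathbf{x}(-1))$: if $G_{k}(\mathbf{x}(-1))\geq 0$ then $Q_{k}(0)=0$ and $Q_{k}(0)^2 = 0 \leq G_{k}(\mathbf{x}(-1))^2$; otherwise $Q_{k}(0) = -G_{k}(\mathbf{x}(-1))$ and $Q_{k}(0)^2 = G_{k}(\mathbf{x}(-1))^2$. Summing over $k$ gives $\Vert\mathbf{Q}(0)\Vert^2 \leq \Vert\mathbf{G}(\mathbf{x}(-1))\Vert^2$. For $t\geq 1$, the key observation is that $Q_{k}(t)\geq |G_{k}(\mathbf{x}(t-1))|$: indeed $Q_{k}(t)\geq -G_{k}(\mathbf{x}(t-1))$ from one branch of the max, and $Q_{k}(t)\geq Q_{k}(t-1)+G_{k}(\mathbf{x}(t-1))\geq G_{k}(\mathbf{x}(t-1))$ from the other branch together with $Q_{k}(t-1)\geq 0$ (part 1 at index $t-1$); combining the two bounds gives $Q_{k}(t)\geq \max\{G_{k}(\mathbf{x}(t-1)),-G_{k}(\mathbf{x}(t-1))\}$. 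Since $Q_{k}(t)\geq 0$ as well, squaring is order-preserving and yields $Q_{k}(t)^2\geq G_{k}(\mathbf{x}(t-1))^2$; summing over $k$ gives $\Vert\mathbf{Q}(t)\Vert^2\geq \Vert\mathbf{G}(\mathbf{x}(t-1))\Vert^2$.

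There is no genuinely hard step here; the only thing requiring care is bookkeeping, namely that the initialization at $t=0$ obeys a different formula than the recursion and that the inequality in part 3 points the opposite way at $t=0$ versus $t\geq 1$, so the two base cases must be dispatched separately from the inductive/recursive arguments, and part 1 must be in hand at the previous index before part 3 can be concluded at the current index.
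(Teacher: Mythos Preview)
Your proof is correct. The paper does not actually supply its own proof of this lemma; it simply cites \cite{YuNeely17SIOPT} and observes that the virtual queue update in \cref{alg:new-alg} is identical to that in \cref{alg:general-alg}, so the result carries over verbatim. Your self-contained coordinatewise argument---averaging the two branches of the max for part~1, reading off part~2 directly from the max, and combining both branches with part~1 at the previous index to get $Q_k(t)\geq |G_k(\mathbf{x}(t-1))|$ for part~3---is exactly the natural direct proof and is what the cited reference contains.
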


\begin{Lem}[Lemma 7 in \cite{YuNeely17SIOPT}]\label{lm:queue-constraint-inequality}
Let $\mathbf{Q}(t), t\in\{0,1,\ldots\}$ be the sequence generated by \cref{alg:new-alg}.  
For any $t\geq 1$, 
\[ Q_k(t) \geq   \displaystyle{\sum_{\tau=0}^{t-1} G_k(\mathbf{x}(\tau))}, \forall k\in\{1,2,\ldots,m\}. \]
\end{Lem}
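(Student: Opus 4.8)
The plan is to prove the bound by induction on $t$, exploiting the one-sided inequality that is baked into the $\max$ in the virtual queue update $Q_k(t+1) = \max\{-G_k(\mathbf{x}(t)), Q_k(t) + G_k(\mathbf{x}(t))\}$; namely, dropping the first branch always gives $Q_k(t+1) \geq Q_k(t) + G_k(\mathbf{x}(t))$. Iterating this single telescoping inequality is essentially all that is needed, so the only real input from outside is the nonnegativity of the queue initialization.

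For the base case $t=1$, I would write $Q_k(1) \geq Q_k(0) + G_k(\mathbf{x}(0)) \geq G_k(\mathbf{x}(0))$, where the first step uses the $\max$ and the second uses $Q_k(0) = \max\{0, -G_k(\mathbf{x}(-1))\} \geq 0$ (this is also recorded in part 1 of \cref{lm:virtual-queue}). Since $\sum_{\tau=0}^{0} G_k(\mathbf{x}(\tau)) = G_k(\mathbf{x}(0))$, the claim holds at $t=1$.

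For the inductive step, assuming $Q_k(t) \geq \sum_{\tau=0}^{t-1} G_k(\mathbf{x}(\tau))$ for some $t \geq 1$, I would chain
\begin{align*}
Q_k(t+1) \geq Q_k(t) + G_k(\mathbf{x}(t)) \geq \sum_{\tau=0}^{t-1} G_k(\mathbf{x}(\tau)) + G_k(\mathbf{x}(t)) = \sum_{\tau=0}^{t} G_k(\mathbf{x}(\tau)),
\end{align*}
which is exactly the desired inequality at index $t+1$. Since $k$ is arbitrary, this closes the induction and establishes the lemma for all $t \geq 1$.

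There is no genuine obstacle here: the argument is a routine induction, and the $\max$ update is precisely engineered so that the telescoping works. The only point requiring a touch of care is the base case, where one must invoke $Q_k(0) \geq 0$ rather than an inductive hypothesis; everything else is immediate from the structure of the queue recursion.
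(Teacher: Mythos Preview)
Your proof is correct and is exactly the standard telescoping/induction argument one would expect. The paper does not actually prove this lemma in-text; it simply cites \cite{YuNeely17SIOPT} (Lemma~7 there), noting that the virtual-queue results carry over verbatim because the queue update rule is unchanged, so there is no substantive comparison to draw.
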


Let $\mathbf{Q}(t) = \big[ Q_1(t), \ldots, Q_m(t)\big]\tran$ be the vector of virtual queue backlogs.  Define  $L(t) = \frac{1}{2} \Vert \mathbf{Q}(t)\Vert^2$. The function $L(t)$ shall be called a \emph{Lyapunov function}. Define the {Lyapunov drift} as 
\begin{align}
\Delta (t) = L(t+1) - L(t) = \frac{1}{2} [ \Vert \mathbf{Q}(t+1)\Vert^{2} - \Vert \mathbf{Q}(t)\Vert^{2}]. \label{eq:def-drift}
\end{align}

\begin{Lem} [Lemma 4 in \cite{YuNeely17SIOPT}]\label{lm:drift} At each iteration $t\in\{0,1,2,\ldots\}$ in \cref{alg:new-alg}, an upper bound of the Lyapunov drift is given by
\begin{align}
\Delta(t) \leq \mathbf{Q}\tran(t) \mathbf{G}(\mathbf{x}(t))  +\Vert \mathbf{G}(\mathbf{x}(t))\Vert^2.  \label{eq:drift}
\end{align}
\end{Lem}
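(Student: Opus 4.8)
The plan is to reduce the drift bound to a coordinate‑wise estimate on the squared virtual‑queue backlog after one max‑update, then sum over coordinates and expand the resulting square. Throughout I fix an iteration $t\in\{0,1,2,\ldots\}$ and abbreviate $G_k = G_k(\mathbf{x}(t))$. Note that, as remarked before the statement, the queue recursion in \cref{alg:new-alg} is identical to the one in \cite{YuNeely17SIOPT}, so the entire argument is independent of how $\mathbf{x}(t)$ is generated.

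The key step is the elementary scalar inequality $\big(\max\{a,b\}\big)^2 \le a^2 + b^2$, valid for all real $a,b$ (it follows from $\big|\max\{a,b\}\big| \le \max\{|a|,|b|\}$, after which one bounds $\max\{a^2,b^2\}\le a^2+b^2$). Applying it with $a = -G_k$ and $b = Q_k(t) + G_k$ and using the queue update rule of \cref{alg:new-alg}, I obtain
\[ Q_k(t+1)^2 = \big(\max\{-G_k,\, Q_k(t)+G_k\}\big)^2 \le G_k^2 + \big(Q_k(t)+G_k\big)^2 \]
for every $k\in\{1,\ldots,m\}$. (Part 1 of \cref{lm:virtual-queue}, namely $Q_k(t)\ge 0$, is not even needed for this inequality, though one could alternatively run a direct two‑case argument on which branch of the maximum is active and invoke $Q_k(t)\ge0$ there.) Summing over $k$ gives $\Vert\mathbf{Q}(t+1)\Vert^2 \le \Vert\mathbf{G}(\mathbf{x}(t))\Vert^2 + \Vert\mathbf{Q}(t)+\mathbf{G}(\mathbf{x}(t))\Vert^2$.

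Finally, I would expand $\Vert\mathbf{Q}(t)+\mathbf{G}(\mathbf{x}(t))\Vert^2 = \Vert\mathbf{Q}(t)\Vert^2 + 2\mathbf{Q}\tran(t)\mathbf{G}(\mathbf{x}(t)) + \Vert\mathbf{G}(\mathbf{x}(t))\Vert^2$, substitute into the previous display, and recall the definition $\Delta(t) = \tfrac12\big[\Vert\mathbf{Q}(t+1)\Vert^2 - \Vert\mathbf{Q}(t)\Vert^2\big]$ from \eqref{eq:def-drift}; this yields $\Delta(t) \le \mathbf{Q}\tran(t)\mathbf{G}(\mathbf{x}(t)) + \Vert\mathbf{G}(\mathbf{x}(t))\Vert^2$, which is \eqref{eq:drift}. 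I do not anticipate any genuine obstacle: the only nontrivial content is the bound on the square of a maximum, and the remainder is a single expansion of an inner product.
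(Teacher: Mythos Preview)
Your argument is correct. The scalar inequality $(\max\{a,b\})^2 \le a^2 + b^2$ applied coordinatewise to the queue update, followed by summation and expansion of the square, yields exactly \eqref{eq:drift}; nothing is missing. The paper itself does not supply an independent proof of this lemma---it simply imports the result as Lemma~4 of \cite{YuNeely17SIOPT}---so there is no in-paper proof to compare against, but your derivation is the standard one used for this type of max-based virtual-queue recursion and matches what is done in the cited reference.
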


\subsection{Properties from Strong Duality}

The next lemma follows from \cref{lm:queue-constraint-inequality} and \cref{as:strong-duality}.

\begin{Lem}\label{lm:obj-diff-bound-from-strong-duality}
Let $\mathbf{x}^{\ast}$ be an optimal solution of problem \eqref{eq:program-objective}-\eqref{eq:program-set-constraint} and $\boldsymbol{\lambda}^\ast$ be a Lagrange multiplier vector satisfying \cref{as:strong-duality}. Let $\mathbf{x}(t), \mathbf{Q}(t), t\in\{0,1,\ldots\}$ be sequences generated by \cref{alg:new-alg}. Then,
\begin{align*}
\sum_{\tau=0}^{t-1} F(\mathbf{x}(\tau)) \geq t F(\mathbf{x}^\ast) -  \Vert \boldsymbol{\lambda}^\ast\Vert \Vert \mathbf{Q}(t)\Vert, \quad \forall t\geq 1. 
\end{align*}
\end{Lem}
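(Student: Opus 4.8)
The plan is to start from the weak-duality-style inequality built into Assumption~\ref{as:strong-duality}. For any fixed $\boldsymbol{\lambda}\geq\mathbf{0}$, strong duality gives
\begin{align*}
F(\mathbf{x}^\ast) = q(\boldsymbol{\lambda}^\ast) \leq F(\mathbf{x}) + \sum_{k=1}^m \lambda_k^\ast G_k(\mathbf{x}) \qquad \text{for all } \mathbf{x}\in\mathcal{X},
\end{align*}
since $q(\boldsymbol{\lambda}^\ast)$ is the minimum over $\mathcal{X}$ of $F(\mathbf{x})+\sum_k\lambda_k^\ast G_k(\mathbf{x})$. Applying this with $\mathbf{x}=\mathbf{x}(\tau)$ for each $\tau\in\{0,1,\ldots,t-1\}$ and summing yields
\begin{align*}
\sum_{\tau=0}^{t-1} F(\mathbf{x}(\tau)) \geq t\,F(\mathbf{x}^\ast) - \sum_{k=1}^m \lambda_k^\ast \sum_{\tau=0}^{t-1} G_k(\mathbf{x}(\tau)).
\end{align*}

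Next I would control the double sum on the right. By \cref{lm:queue-constraint-inequality}, for every $k$ we have $\sum_{\tau=0}^{t-1} G_k(\mathbf{x}(\tau)) \leq Q_k(t)$, and by part~1 of \cref{lm:virtual-queue}, $Q_k(t)\geq 0$; together with $\lambda_k^\ast\geq 0$ this lets me replace each $\lambda_k^\ast\sum_\tau G_k(\mathbf{x}(\tau))$ by the larger quantity $\lambda_k^\ast Q_k(t)$ without changing the direction of the inequality. Hence
\begin{align*}
\sum_{\tau=0}^{t-1} F(\mathbf{x}(\tau)) \geq t\,F(\mathbf{x}^\ast) - \sum_{k=1}^m \lambda_k^\ast Q_k(t) = t\,F(\mathbf{x}^\ast) - (\boldsymbol{\lambda}^\ast)\tran \mathbf{Q}(t).
\end{align*}

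Finally I would apply the Cauchy--Schwarz inequality, $(\boldsymbol{\lambda}^\ast)\tran\mathbf{Q}(t) \leq \Vert\boldsymbol{\lambda}^\ast\Vert\,\Vert\mathbf{Q}(t)\Vert$, to obtain the claimed bound $\sum_{\tau=0}^{t-1}F(\mathbf{x}(\tau))\geq t\,F(\mathbf{x}^\ast)-\Vert\boldsymbol{\lambda}^\ast\Vert\,\Vert\mathbf{Q}(t)\Vert$ for all $t\geq 1$. There is no real obstacle here; the only point requiring a moment's care is the sign bookkeeping in the middle step --- one must invoke both $Q_k(t)\geq 0$ and $\lambda_k^\ast\geq 0$ to be sure that upper-bounding the partial sums of $G_k(\mathbf{x}(\tau))$ by $Q_k(t)$ genuinely weakens (rather than strengthens) the inequality. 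Everything else is a direct chaining of Assumption~\ref{as:strong-duality}, \cref{lm:queue-constraint-inequality}, and Cauchy--Schwarz.
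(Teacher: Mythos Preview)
Your proof is correct and follows essentially the same route as the paper: strong duality to get $F(\mathbf{x}(\tau))\geq F(\mathbf{x}^\ast)-\sum_k\lambda_k^\ast G_k(\mathbf{x}(\tau))$, summation over $\tau$, \cref{lm:queue-constraint-inequality} together with $\lambda_k^\ast\geq 0$, and then Cauchy--Schwarz. One small remark: the appeal to $Q_k(t)\geq 0$ in the middle step is not actually needed---since $\lambda_k^\ast\geq 0$, multiplying $\sum_\tau G_k(\mathbf{x}(\tau))\leq Q_k(t)$ by $\lambda_k^\ast$ preserves the inequality regardless of the sign of $Q_k(t)$.
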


\begin{proof}
The proof is quite similar to the proof of Lemma 8 in \cite{YuNeely17SIOPT}. Define Lagrangian dual function $q(\boldsymbol{\lambda}) = \min\limits_{\mathbf{x}\in \mathcal{X}}\{F(\mathbf{x})+ \sum_{k=1}^m \lambda_k G_k(\mathbf{x})\}$. For all $\tau\in\{0,1,\ldots\}$, by \cref{as:strong-duality}, we have
\begin{align*}
F(\mathbf{x}^{\ast}) = q(\boldsymbol{\lambda}^{\ast}) \overset{(a)}{\leq}  F(\mathbf{x}(\tau)) + \sum_{k=1}^m \lambda_k^\ast G_k(\mathbf{x}(\tau)),
\end{align*} 
where (a) follows the definition of $q(\boldsymbol{\lambda}^{\ast})$.

Thus, we have $ F(\mathbf{x}(\tau))  \geq F(\mathbf{x}^\ast) -\sum_{k=1}^m \lambda_k^\ast G_k(\mathbf{x}(\tau)),\forall \tau \in\{0,1,\ldots\}$. Summing over $\tau\in \{0,1,\ldots, t-1\}$ yields 
\begin{align}
\sum_{\tau=0}^{t-1} F(\mathbf{x}(\tau)) \geq &t F(\mathbf{x}^\ast) -  \sum_{\tau=0}^{t-1} \sum_{k=1}^m\lambda_k^\ast G_k(\mathbf{x}(\tau))\nonumber \\
=&t F(\mathbf{x}^\ast)  - \sum_{k=1}^m  \lambda_k^\ast \Big[\sum_{\tau=0}^{t-1}G_k(\mathbf{x}(\tau))\Big] \nonumber\\
\overset{(a)}{\geq}& t F(\mathbf{x}^\ast)  -\sum_{k=1}^m  \lambda_k^\ast Q_k(t) \nonumber\\
\overset{(b)}{\geq}& t F(\mathbf{x}^\ast)  -\Vert \boldsymbol{\lambda}^\ast\Vert _{2}\Vert \mathbf{Q}(t)\Vert, \nonumber
\end{align}
where $(a)$ follows from \cref{lm:queue-constraint-inequality} and the fact that $\lambda_k^\ast \geq 0, \forall k\in\{1,2,\ldots, m\}$; and $(b)$ follows from the Cauchy-Schwarz inequality. 
\end{proof}

\subsection{An Upper Bound of the Drift-Plus-Penalty Expression}

\begin{Lem}\label{lm:dpp-bound}
Let $\mathbf{x}^{\ast}$ be an optimal solution of problem \eqref{eq:program-objective}-\eqref{eq:program-set-constraint}. For all $t\geq 0$ in \cref{alg:new-alg}, we have
\begin{align*}
&\Delta(t) + F(\mathbf{x}(t)) \\
\leq &F(\mathbf{x}^{\ast}) + \alpha(t) \big[\Vert \mathbf{x}^{\ast}- \mathbf{x}(t-1)\Vert^{2} - \Vert \mathbf{x}^{\ast} - \mathbf{x}(t)\Vert^{2}\big] +\frac{1}{2} \big[ \Vert \mathbf{G}(\mathbf{x}(t))\Vert^{2} -  \Vert \mathbf{G}(\mathbf{x}(t-1))\Vert^{2} \big] \\ &+\big[\frac{\beta^{2} + L_{f} + [\mathbf{Q}(t) + \mathbf{G}(\mathbf{x}(t-1))]\tran \mathbf{L}_{\mathbf{g}}}{2} -\alpha(t) \big] \Vert \mathbf{x}(t) - \mathbf{x}(t-1)\Vert^{2}
\end{align*}
where $\beta, L_{f}$ and $\mathbf{L}_{\mathbf{g}}$ are defined in \cref{as:basic}.
\end{Lem}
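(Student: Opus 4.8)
The plan is to assemble the bound from four ingredients already available in the excerpt: the drift bound of \cref{lm:drift}, the strong convexity of the separable subproblem solved in the $\mathbf{x}(t)$-update of \cref{alg:new-alg} (via \cref{lm:strong-convex-quadratic-optimality}), the descent lemma \cref{lm:descent-lemma} applied to the smooth pieces $f$ and each $g_k$, and the first-order convexity inequalities for $f$ and $g_k$ evaluated at $\mathbf{x}^{\ast}$. Throughout, the nonnegativity facts $Q_k(t)\geq 0$ and $Q_k(t)+G_k(\mathbf{x}(t-1))\geq 0$ from \cref{lm:virtual-queue} are used to multiply inequalities by these coefficients without flipping their direction.

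First I would massage the drift. Starting from $\Delta(t)\leq \mathbf{Q}\tran(t)\mathbf{G}(\mathbf{x}(t)) + \Vert\mathbf{G}(\mathbf{x}(t))\Vert^{2}$, write $\mathbf{Q}\tran(t)\mathbf{G}(\mathbf{x}(t)) + \Vert\mathbf{G}(\mathbf{x}(t))\Vert^{2} = [\mathbf{Q}(t)+\mathbf{G}(\mathbf{x}(t-1))]\tran\mathbf{G}(\mathbf{x}(t)) + \mathbf{G}(\mathbf{x}(t))\tran[\mathbf{G}(\mathbf{x}(t))-\mathbf{G}(\mathbf{x}(t-1))]$, apply the identity $2\mathbf{a}\tran(\mathbf{a}-\mathbf{b}) = \Vert\mathbf{a}\Vert^{2}-\Vert\mathbf{b}\Vert^{2}+\Vert\mathbf{a}-\mathbf{b}\Vert^{2}$ to the last product, and bound $\Vert\mathbf{G}(\mathbf{x}(t))-\mathbf{G}(\mathbf{x}(t-1))\Vert^{2}\leq\beta^{2}\Vert\mathbf{x}(t)-\mathbf{x}(t-1)\Vert^{2}$ by \cref{as:basic}. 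Adding $F(\mathbf{x}(t))$, this reduces the claim to bounding the penalty term $P(t):=F(\mathbf{x}(t)) + [\mathbf{Q}(t)+\mathbf{G}(\mathbf{x}(t-1))]\tran\mathbf{G}(\mathbf{x}(t))$, with leftover terms $\tfrac12[\Vert\mathbf{G}(\mathbf{x}(t))\Vert^{2}-\Vert\mathbf{G}(\mathbf{x}(t-1))\Vert^{2}] + \tfrac{\beta^{2}}{2}\Vert\mathbf{x}(t)-\mathbf{x}(t-1)\Vert^{2}$.

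Next I would exploit the subproblem. Let $\Phi_t(\mathbf{x})$ be the objective minimized at step $t$ of \cref{alg:new-alg}; since its linear and separable-convex parts form a convex function and $\alpha(t)\Vert\mathbf{x}-\mathbf{x}(t-1)\Vert^{2}$ is strongly convex with modulus $2\alpha(t)$, $\Phi_t$ is strongly convex with modulus $2\alpha(t)$, so \cref{lm:strong-convex-quadratic-optimality} yields $\Phi_t(\mathbf{x}(t))\leq\Phi_t(\mathbf{x}^{\ast})-\alpha(t)\Vert\mathbf{x}^{\ast}-\mathbf{x}(t)\Vert^{2}$. Using the descent lemma on $f$ (modulus $L_f$) and on each $g_k$ (modulus $L_{g_k}$, the $k$-th entry of $\mathbf{L}_{\mathbf{g}}$), the latter multiplied by $Q_k(t)+G_k(\mathbf{x}(t-1))\geq 0$, I can write $P(t)$ in terms of $\Phi_t(\mathbf{x}(t))$ plus a constant $C_t$ collecting the ``$\phi(\mathbf{x}(t-1))-[\nabla\phi(\mathbf{x}(t-1))]\tran\mathbf{x}(t-1)$'' pieces (that appear when $[\nabla\phi(\mathbf{x}(t-1))]\tran\mathbf{x}(t)$ is recast as an increment), plus the quadratic $\tfrac12\big(L_f+[\mathbf{Q}(t)+\mathbf{G}(\mathbf{x}(t-1))]\tran\mathbf{L}_{\mathbf{g}}\big)\Vert\mathbf{x}(t)-\mathbf{x}(t-1)\Vert^{2}$ and a $-\alpha(t)\Vert\mathbf{x}(t)-\mathbf{x}(t-1)\Vert^{2}$ from the embedded proximal term. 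Plugging in the optimality inequality and expanding $\Phi_t(\mathbf{x}^{\ast})+C_t$, the linear terms reassemble into first-order expansions $f(\mathbf{x}(t-1))+[\nabla f(\mathbf{x}(t-1))]\tran(\mathbf{x}^{\ast}-\mathbf{x}(t-1))$ and $g_k(\mathbf{x}(t-1))+[\nabla g_k(\mathbf{x}(t-1))]\tran(\mathbf{x}^{\ast}-\mathbf{x}(t-1))$, which convexity bounds by $f(\mathbf{x}^{\ast})$ and $g_k(\mathbf{x}^{\ast})$; hence $\Phi_t(\mathbf{x}^{\ast})+C_t\leq F(\mathbf{x}^{\ast})+[\mathbf{Q}(t)+\mathbf{G}(\mathbf{x}(t-1))]\tran\mathbf{G}(\mathbf{x}^{\ast})+\alpha(t)\Vert\mathbf{x}^{\ast}-\mathbf{x}(t-1)\Vert^{2}$, and since $\mathbf{G}(\mathbf{x}^{\ast})\leq\mathbf{0}$ while $\mathbf{Q}(t)+\mathbf{G}(\mathbf{x}(t-1))\geq\mathbf{0}$, the cross term is nonpositive and is dropped. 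Combining with the leftovers from the drift step and collecting the three $\Vert\mathbf{x}(t)-\mathbf{x}(t-1)\Vert^{2}$ coefficients ($\beta^{2}/2$, $(L_f+[\mathbf{Q}(t)+\mathbf{G}(\mathbf{x}(t-1))]\tran\mathbf{L}_{\mathbf{g}})/2$, and $-\alpha(t)$) gives exactly the stated inequality.

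The computation is conceptually routine once this structure is in place; the main obstacle is the bookkeeping of the constant terms $C_t$ when converting the linearized objective $\Phi_t$ back into the true $F$ and $\mathbf{G}$, together with checking that every multiplication of a $g_k$-inequality by $Q_k(t)+G_k(\mathbf{x}(t-1))$ is legitimate, which rests entirely on part~2 of \cref{lm:virtual-queue}. I would also note that the argument is uniform over $t\geq 0$, including $t=0$, since \cref{lm:drift} and \cref{lm:virtual-queue} both hold there with $\mathbf{x}(-1)$ the chosen initialization.
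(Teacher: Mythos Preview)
Your proposal is correct and follows essentially the same approach as the paper: both combine the drift bound of \cref{lm:drift}, the strong-convexity optimality inequality of \cref{lm:strong-convex-quadratic-optimality} for the subproblem, the descent lemma applied to $f$ and each $g_k$ (with the nonnegative multipliers from \cref{lm:virtual-queue}), first-order convexity at $\mathbf{x}^\ast$, and the Lipschitz bound on $\mathbf{G}$, together with a polarization identity to split $[\mathbf{G}(\mathbf{x}(t-1))]\tran\mathbf{G}(\mathbf{x}(t))$. The only difference is the order of assembly---you process the drift first and isolate $P(t)$, whereas the paper first bounds $F(\mathbf{x}(t)) + [\mathbf{Q}(t)+\mathbf{G}(\mathbf{x}(t-1))]\tran\mathbf{G}(\mathbf{x}(t))$ and adds the drift at the end---but the underlying computation is identical.
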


\begin{proof} 
Fix $t\geq 0$. By part (2) in \cref{lm:virtual-queue}, $Q_{k}(t) + G_{k}(\mathbf{x}(t-1))\geq 0, \forall k\in\{1,2,\ldots,m\}$. Thus, $[\nabla f(\mathbf{x}(t-1))]\tran \mathbf{x}  + \tilde{f}(\mathbf{x}) + \sum_{k=1}^m \big[Q_k(t) + G_k(\mathbf{x}(t-1))\big]\big[[\nabla g_k(\mathbf{x}(t-1))]\tran \mathbf{x} + \tilde{g}_k(\mathbf{x})\big]$ is convex with respect to $\mathbf{x}\in \mathcal{X}$. Since $\alpha(t) \Vert \mathbf{x} - \mathbf{x}(t-1)\Vert^{2}$ is strongly convex with respect to $\mathbf{x}$ with modulus $2\alpha(t)$, it follows that  $[\nabla f(\mathbf{x}(t-1))]\tran \mathbf{x}  + \tilde{f}(\mathbf{x}) + \sum_{k=1}^m \big[Q_k(t) + G_k(\mathbf{x}(t-1))\big]\big[[\nabla g_k(\mathbf{x}(t-1))]\tran \mathbf{x} + \tilde{g}_k(\mathbf{x})\big]  + \alpha(t) \Vert \mathbf{x} - \mathbf{x}(t-1)\Vert^{2}$ is strongly convex with respect to $\mathbf{x}$ with modulus $2\alpha(t)$.

Since $\mathbf{x}(t)$ is chosen to minimize the above strongly convex function, by \cref{lm:strong-convex-quadratic-optimality},  we have

\begin{align*}
& [\nabla f(\mathbf{x}(t-1))]\tran \mathbf{x}(t)  + \tilde{f}(\mathbf{x}(t))  \\
&+ \sum_{k=1}^m \big[Q_k(t) + G_k(\mathbf{x}(t-1))\big]\big[[\nabla g_k(\mathbf{x}(t-1))]\tran \mathbf{x}(t) + \tilde{g}_k(\mathbf{x}(t))\big]  \\ &+ \alpha(t) \Vert \mathbf{x}(t) - \mathbf{x}(t-1)\Vert^{2}\\
\leq & [\nabla f(\mathbf{x}(t-1))]\tran \mathbf{x}^{\ast}  + \tilde{f}(\mathbf{x}^\ast) \\ &+ \sum_{k=1}^m \big[Q_k(t) + G_k(\mathbf{x}(t-1))\big]\big[[\nabla g_k(\mathbf{x}(t-1))]\tran \mathbf{x}^{\ast} + \tilde{g}_k(\mathbf{x}^{\ast})\big]  \\ &+ \alpha(t) \Vert \mathbf{x}^{\ast} - \mathbf{x}(t-1)\Vert^{2} - \alpha(t) \Vert \mathbf{x}^{\ast} - \mathbf{x}(t)\Vert^{2}
\end{align*}

Adding constant $f(\mathbf{x}(t-1)) - [\nabla f(\mathbf{x}(t-1))]\tran \mathbf{x}(t-1)+ \sum_{k=1}^{m}\big[Q_{k}(t) + G_{k}(t-1)\big]\big[g_{k}(\mathbf{x}(t-1)) - [\nabla g_{k}(\mathbf{x}(t-1))]\tran \mathbf{x}(t-1)\big]$ on both sides and rearranging terms yields
\begin{align}
& f(\mathbf{x}(t-1)) + [\nabla f(\mathbf{x}(t-1))]\tran [\mathbf{x}(t)-\mathbf{x}(t-1)]  + \tilde{f}(\mathbf{x}(t))  \nonumber\\ & + \sum_{k=1}^m \big[Q_k(t) + G_k(\mathbf{x}(t-1))\big]\Big[g_{k}(\mathbf{x}(t-1)) + [\nabla g_k(\mathbf{x}(t-1))]\tran [\mathbf{x}(t)-\mathbf{x}(t-1)] \nonumber\\& \qquad\qquad\qquad\qquad\qquad\qquad\quad+ \tilde{g}_k(\mathbf{x}(t))\Big]  \nonumber\\
\leq & f(\mathbf{x}(t-1)) + [\nabla f(\mathbf{x}(t-1))]\tran [\mathbf{x}^{\ast} - \mathbf{x}(t-1)]  + \tilde{f}(\mathbf{x}^{\ast})  \nonumber\\ &+ \sum_{k=1}^m \big[Q_k(t) + G_k(\mathbf{x}(t-1))\big]\Big[ g_{k}(\mathbf{x}(t-1)) + [\nabla g_k(\mathbf{x}(t-1))]\tran[ \mathbf{x}^{\ast}-\mathbf{x}(t-1)] \nonumber \\&\qquad\qquad\qquad\qquad\qquad\qquad\quad+ \tilde{g}_k(\mathbf{x}^{\ast})\Big]   \nonumber\\ &\quad -  \alpha(t) \Vert \mathbf{x}(t) - \mathbf{x}(t-1)\Vert^{2}  + \alpha(t) \big[\Vert \mathbf{x}^{\ast} - \mathbf{x}(t-1)\Vert^{2} - \Vert \mathbf{x}^{\ast} - \mathbf{x}(t)\Vert^{2}\big]  \nonumber\\
\overset{(a)}{\leq}& f(\mathbf{x}^{\ast}) + \tilde{f}(\mathbf{x}^{\ast}) + \sum_{k=1}^m \big[Q_k(t) + G_k(\mathbf{x}(t-1))\big]\big[ g_{k}(\mathbf{x}^{\ast}) + \tilde{g}_k(\mathbf{x}^{\ast})\big]  \nonumber\\ &- \alpha(t) \Vert \mathbf{x}(t) - \mathbf{x}(t-1)\Vert^{2}   + \alpha(t) \big[\Vert \mathbf{x}^{\ast} - \mathbf{x}(t-1)\Vert^{2} - \Vert \mathbf{x}^{\ast} - \mathbf{x}(t)\Vert^{2}\big]  \nonumber\\
\overset{(b)}{\leq} &F(\mathbf{x}^{\ast})  -  \alpha(t) \Vert \mathbf{x}(t) - \mathbf{x}(t-1)\Vert^{2}  + \alpha(t) \big[\Vert \mathbf{x}^{\ast} - \mathbf{x}(t-1)\Vert^{2} - \Vert \mathbf{x}^{\ast} - \mathbf{x}(t)\Vert^{2}\big] \label{eq:pf-dpp-bound-eq1}
\end{align}
where (a) follows from the convexity of $f(\cdot)$ and each $g_{k}(\cdot)$, and the fact that $Q_{k}(t) + G_{k}(\mathbf{x}(t-1))\geq 0, \forall k\in\{1,2,\ldots,m\}$ (i.e., part (2) in \cref{lm:virtual-queue}); and (b) follows because $F(\mathbf{x}^\ast)=f(\mathbf{x}^\ast)+\tilde{f}(\mathbf{x}^\ast)$, $G_{k}(\mathbf{x}^{\ast}) = g_{k}(\mathbf{x}^{\ast}) + \tilde{g}_{k}(\mathbf{x}^{\ast})\leq 0, \forall k\in\{1,2,\ldots,m\}$, which further follows from the feasibility of $\mathbf{x}^{\ast}$, and $Q_{k}(t) + G_{k}(\mathbf{x}(t-1))\geq 0, \forall k\in\{1,2,\ldots,m\}$ (i.e., part (2) in \cref{lm:virtual-queue}).

Recall that $f(\mathbf{x})$ is smooth on $\mathcal{X}$ with modulus $L_{f}$ by \cref{as:basic}. By \cref{lm:descent-lemma}, we have
\begin{align*}
f(\mathbf{x}(t))  \leq f(\mathbf{x}(t-1)) + [\nabla f(\mathbf{x}(t-1))]\tran[\mathbf{x}(t) - \mathbf{x}(t-1)] +\frac{L_{f}}{2} \Vert \mathbf{x}(t) - \mathbf{x}(t-1)\Vert^{2}.
\end{align*}
Adding $\tilde{f}(\mathbf{x}(t))$ on both sides, recalling $F(\mathbf{x}) = f(\mathbf{x})+ \tilde{f}(\mathbf{x})$ and rearranging terms yields
\begin{align}
&F(\mathbf{x}(t))-\frac{L_{f}}{2} \Vert \mathbf{x}(t) - \mathbf{x}(t-1)\Vert^{2} \nonumber \\
\leq &f(\mathbf{x}(t-1)) + [\nabla f(\mathbf{x}(t-1))]\tran[\mathbf{x}(t) - \mathbf{x}(t-1)]   + \tilde{f}(\mathbf{x}(t)). \label{eq:pf-dpp-bound-eq2}
\end{align}

Recall that each $g_{k}(\mathbf{x})$ is smooth on $\mathcal{X}$ with modulus $L_{g_{k}}$ by \cref{as:basic}. Thus, each $[Q_{k}(t) + G_{k}(\mathbf{x}(t-1))] g_{k}(\mathbf{x})$ is smooth with modulus $[Q_{k}(t) + G_{k}(\mathbf{x}(t-1))] L_{g_{k}}$. By \cref{lm:descent-lemma}, we have
\begin{align}
&[Q_{k}(t) + G_{k}(\mathbf{x}(t-1)) ] g_{k}(\mathbf{x}(t)) \nonumber \\
\leq &[Q_{k}(t) + G_{k}(\mathbf{x}(t-1)) ] g_{k}(\mathbf{x}(t-1)) \nonumber \\&+ [Q_{k}(t) + G_{k}(\mathbf{x}(t-1))] [\nabla g_{k}(\mathbf{x}(t-1))]\tran  [\mathbf{x}(t) - \mathbf{x}(t-1)] \nonumber \\ &+  \frac{[Q_{k}(t) + G_{k}(\mathbf{x}(t-1))] L_{g_{k}}}{2} \Vert \mathbf{x}(t) - \mathbf{x}(t-1)\Vert^{2} . \label{eq:pf-dpp-bound-eq3}
\end{align}
Summing \eqref{eq:pf-dpp-bound-eq3} over $k\in\{1,2,\ldots, m\}$ yields
\begin{align*}
&\sum_{k=1}^{m} [Q_{k}(t) + G_{k}(\mathbf{x}(t-1)) ] g_{k}(\mathbf{x}(t))\\
\leq &\sum_{k=1}^{m}\big[Q_{k}(t) + G_{k}(\mathbf{x}(t-1))\big] \big[ g_{k}(\mathbf{x}(t-1)) + [\nabla g_{k}(\mathbf{x}(t-1))]\tran [\mathbf{x}(t) - \mathbf{x}(t-1)] \big]\nonumber \\ &+  \frac{[\mathbf{Q}(t) + \mathbf{G}(\mathbf{x}(t-1))]\tran \mathbf{L}_{\mathbf{g}}}{2} \Vert \mathbf{x}(t) - \mathbf{x}(t-1)\Vert^{2}. 
\end{align*}
Adding $\sum_{k=1}^{m}[Q_{k}(t) + G_{k}(\mathbf{x}(t-1)) ] \tilde{g}_{k}(\mathbf{x}(t))$ on both sides, recalling $G_k(\mathbf{x}(t)) = g_k(\mathbf{x}(t)) + \tilde{g}_k(\mathbf{x}(t))$ and rearranging terms yields
\begin{align}
&\sum_{k=1}^{m} [Q_{k}(t) + G_{k}(\mathbf{x}(t-1)) ] G_{k}(\mathbf{x}(t)) -  \frac{[\mathbf{Q}(t) + \mathbf{G}(\mathbf{x}(t-1))]\tran \mathbf{L}_{\mathbf{g}}}{2} \Vert \mathbf{x}(t) - \mathbf{x}(t-1)\Vert^{2} \nonumber\\
\leq &\sum_{k=1}^{m}\big[Q_{k}(t) + G_{k}(\mathbf{x}(t-1))\big] \Big[ g_{k}(\mathbf{x}(t-1)) + [\nabla g_{k}(\mathbf{x}(t-1))]\tran [\mathbf{x}(t) - \mathbf{x}(t-1)] \nonumber\\& \qquad\qquad\qquad\qquad\qquad\qquad+ \tilde{g}_{k}(\mathbf{x}(t))\Big]. \label{eq:pf-dpp-bound-eq4} 
\end{align}

Summing up \eqref{eq:pf-dpp-bound-eq2} and \eqref{eq:pf-dpp-bound-eq4} together yields
\begin{align}
&F(\mathbf{x}(t)) + [\mathbf{Q}(t) + \mathbf{G}(\mathbf{x}(t-1))]\tran \mathbf{G}(\mathbf{x}(t)) \nonumber\\& - \frac{L_{f} + [\mathbf{Q}(t) + \mathbf{G}(\mathbf{x}(t-1))]\tran \mathbf{L}_{\mathbf{g}}}{2} \Vert \mathbf{x}(t) - \mathbf{x}(t-1)\Vert^{2}  \nonumber \\
\leq & f(\mathbf{x}(t-1)) + [\nabla f(\mathbf{x}(t-1))]\tran [\mathbf{x}(t)-\mathbf{x}(t-1)]  + \tilde{f}(\mathbf{x}(t))  \nonumber\\ &+\sum_{k=1}^m \big[Q_k(t) + G_k(\mathbf{x}(t-1))\big]\Big[g_{k}(\mathbf{x}(t-1)) + [\nabla g_k(\mathbf{x}(t-1))]\tran [\mathbf{x}(t)-\mathbf{x}(t-1)] \nonumber\\& \qquad\qquad\qquad\qquad\qquad\qquad\quad+ \tilde{g}_k(\mathbf{x}(t))\Big].  \label{eq:pf-dpp-bound-eq5}
\end{align}

Note that the right side of \eqref{eq:pf-dpp-bound-eq5}  is identical to the left side of \eqref{eq:pf-dpp-bound-eq1}. Thus, by combining \eqref{eq:pf-dpp-bound-eq1} and \eqref{eq:pf-dpp-bound-eq5}; and rearranging terms, we have 
 \begin{align}
&F(\mathbf{x}(t)) + [\mathbf{Q}(t) + \mathbf{G}(\mathbf{x}(t-1))]\tran \mathbf{G}(\mathbf{x}(t))   \nonumber \\
\leq &F(\mathbf{x}^{\ast}) + \alpha(t) \big[\Vert \mathbf{x}^{\ast} - \mathbf{x}(t-1)\Vert^{2} - \Vert \mathbf{x}^{\ast} - \mathbf{x}(t)\Vert^{2}\big] \nonumber \\&+ \big[\frac{L_{f} + [\mathbf{Q}(t) + \mathbf{G}(\mathbf{x}(t-1))]\tran \mathbf{L}_{\mathbf{g}}}{2} - \alpha(t)\big] \Vert \mathbf{x}(t) - \mathbf{x}(t-1)\Vert^{2}. \label{eq:pf-dpp-bound-eq6}
\end{align}
 Note that $\mathbf{u} \tran \mathbf{v}= \frac{1}{2} [\Vert  \mathbf{u}\Vert^{2}  + \Vert \mathbf{v}\Vert^{2} - \Vert \mathbf{u} - \mathbf{v}\Vert^{2} ]$ for any $\mathbf{u}, \mathbf{v}\in \mathbb{R}^{m}$. Thus, we have 
 \begin{align}
&[\mathbf{G}(\mathbf{x}(t-1))]\tran \mathbf{G}(\mathbf{x}(t))  \nonumber\\ =&  \frac{1}{2} [ \Vert \mathbf{G}(\mathbf{x}(t-1))\Vert^{2}  + \Vert \mathbf{G}(\mathbf{x}(t))\Vert^{2} - \Vert \mathbf{G}(\mathbf{x}(t-1))-\mathbf{G}(\mathbf{x}(t))\Vert^{2} ]. \label{eq:pf-dpp-bound-eq7} 
\end{align} 
Substituting \eqref{eq:pf-dpp-bound-eq7} into \eqref{eq:pf-dpp-bound-eq6} and rearranging terms  yields
\begin{align*}
 &F(\mathbf{x}(t)) + \mathbf{Q}\tran(t)\mathbf{G}(\mathbf{x}(t)) \nonumber \\
 \leq & F(\mathbf{x}^{\ast}) + \alpha(t) \big[\Vert \mathbf{x}^{\ast}- \mathbf{x}(t-1)\Vert^{2} - \Vert \mathbf{x}^{\ast} - \mathbf{x}(t)\Vert^{2}\big]  \nonumber\\ &+ \big[\frac{L_{f} + [\mathbf{Q}(t) + \mathbf{G}(\mathbf{x}(t-1))]\tran \mathbf{L}_{\mathbf{g}}}{2} -\alpha(t) \big] \Vert \mathbf{x}(t) - \mathbf{x}(t-1)\Vert^{2}   \\ &+ \frac{1}{2}  \Vert \mathbf{G}(\mathbf{x}(t-1))-\mathbf{G}(\mathbf{x}(t))\Vert^{2} - \frac{1}{2}  \Vert \mathbf{G}(\mathbf{x}(t-1))\Vert^{2} - \frac{1}{2}  \Vert \mathbf{G}(\mathbf{x}(t))\Vert^{2}\\
 \overset{(a)}{\leq} &F(\mathbf{x}^{\ast}) + \alpha(t) \big[\Vert \mathbf{x}^{\ast}- \mathbf{x}(t-1)\Vert^{2} -\Vert \mathbf{x}^{\ast} - \mathbf{x}(t)\Vert^{2}\big] \nonumber \\&+ \big[\frac{\beta^{2} + L_{f} + [\mathbf{Q}(t) + \mathbf{G}(\mathbf{x}(t-1))]\tran \mathbf{L}_{\mathbf{g}}}{2} -\alpha(t) \big] \Vert \mathbf{x}(t) - \mathbf{x}(t-1)\Vert^{2} \\ &- \frac{1}{2}  \Vert \mathbf{G}(\mathbf{x}(t-1))\Vert^{2} - \frac{1}{2}  \Vert \mathbf{G}(\mathbf{x}(t))\Vert^{2},
 \end{align*}
where (a) follows from the fact that $\Vert \mathbf{G}(\mathbf{x}(t-1)) - \mathbf{G}(\mathbf{x}(t))\Vert \leq \beta \Vert \mathbf{x}(t) - \mathbf{x}(t-1)\Vert$, which further follows from the assumption that $\mathbf{G}(\mathbf{x})$ is Lipschitz continuous with modulus $\beta$.

Summing \eqref{eq:drift} to the above inequality yields
\begin{align*}
&\Delta(t) + F(\mathbf{x}(t)) \\
\leq &F(\mathbf{x}^{\ast}) + \alpha(t) \big[\Vert \mathbf{x}^{\ast}- \mathbf{x}(t-1)\Vert^{2} - \Vert \mathbf{x}^{\ast} - \mathbf{x}(t)\Vert^{2}\big] +\frac{1}{2} \big[ \Vert \mathbf{G}(\mathbf{x}(t))\Vert^{2} -  \Vert \mathbf{G}(\mathbf{x}(t-1))\Vert^{2} \big] \\ &+\big[\frac{\beta^{2} + L_{f} + [\mathbf{Q}(t) + \mathbf{G}(\mathbf{x}(t-1))]\tran \mathbf{L}_{\mathbf{g}}}{2} -\alpha(t) \big] \Vert \mathbf{x}(t) - \mathbf{x}(t-1)\Vert^{2}
\end{align*}
\end{proof}

The next corollary follows directly by noting that $\mathbf{L}_{\mathbf{g}} = \mathbf{0}$ when each $g_k(\mathbf{x})$ is a linear function.

\begin{Cor}\label{cor:dpp-bound-linear}
Let $\mathbf{x}^{\ast}$ be an optimal solution of problem \eqref{eq:program-objective}-\eqref{eq:program-set-constraint} where each $g_k(\mathbf{x})$ is a linear function. If $\alpha(t) =  \alpha > \frac{1}{2}[\beta^2 + L_f], \forall t\geq 0$ in \cref{alg:new-alg}, then for all $t\geq 0$, we have
\begin{align*}
&\Delta(t) + F(\mathbf{x}(t)) \\
\leq &F(\mathbf{x}^{\ast}) + \alpha \big[\Vert \mathbf{x}^{\ast}- \mathbf{x}(t-1)\Vert^{2} - \Vert \mathbf{x}^{\ast} - \mathbf{x}(t)\Vert^{2}\big] +\frac{1}{2} \big[ \Vert \mathbf{G}(\mathbf{x}(t))\Vert^{2} -  \Vert \mathbf{G}(\mathbf{x}(t-1))\Vert^{2} \big] 
\end{align*}
where $\beta$ and $L_{f}$  are defined in \cref{as:basic}.
\end{Cor}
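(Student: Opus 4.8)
The plan is to obtain \cref{cor:dpp-bound-linear} as an immediate specialization of \cref{lm:dpp-bound}. First I would note that when each $g_k(\mathbf{x})$ is linear, say $g_k(\mathbf{x}) = \mathbf{a}_k\tran \mathbf{x} + b_k$, the gradient $\nabla g_k(\mathbf{x}) = \mathbf{a}_k$ is constant, so $g_k$ is smooth with modulus $L_{g_k} = 0$ (this is exactly the remark recorded just after the definition of smooth functions). Hence every entry of the vector $\mathbf{L}_{\mathbf{g}}$ is zero, i.e. $\mathbf{L}_{\mathbf{g}} = \mathbf{0}$, and therefore $[\mathbf{Q}(t) + \mathbf{G}(\mathbf{x}(t-1))]\tran \mathbf{L}_{\mathbf{g}} = 0$ for every $t$, independently of the virtual queue backlog.

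Next I would substitute $\mathbf{L}_{\mathbf{g}} = \mathbf{0}$ and $\alpha(t) \equiv \alpha$ into the inequality of \cref{lm:dpp-bound}. The coefficient multiplying $\Vert \mathbf{x}(t) - \mathbf{x}(t-1)\Vert^2$ then collapses to $\frac{1}{2}(\beta^2 + L_f) - \alpha$, which is strictly negative by the standing hypothesis $\alpha > \frac{1}{2}(\beta^2 + L_f)$. Since $\Vert \mathbf{x}(t) - \mathbf{x}(t-1)\Vert^2 \ge 0$, the whole term $\big[\frac{1}{2}(\beta^2 + L_f) - \alpha\big]\Vert \mathbf{x}(t) - \mathbf{x}(t-1)\Vert^2$ is nonpositive and may be dropped from the right-hand side; what remains is precisely the asserted bound, valid for all $t \ge 0$ because \cref{lm:dpp-bound} is.

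There is no real obstacle here: the corollary is a one-line consequence of the lemma. The only point deserving a sentence of care is the claim $L_{g_k} = 0$ for linear $g_k$, which is exactly why linearity of $\mathbf{g}$ removes the dependence of the usable step-size range on the (unknown, possibly large) queue term $\mathbf{Q}(t)$; in the general nonlinear case that term is what prevents a fixed constant $\alpha$ from dominating the quadratic, and is the reason \cref{alg:new-alg} employs a time-varying $\alpha(t)$. I would also remark in passing that constancy of $\alpha$ is not essential to annihilate the quadratic term — any schedule with $\alpha(t) \ge \frac{1}{2}(\beta^2 + L_f)$ would do — but the corollary is stated with a single constant $\alpha$ since that is the form invoked later.
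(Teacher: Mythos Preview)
Your proposal is correct and follows essentially the same approach as the paper's own proof: observe that linearity of each $g_k$ gives $\mathbf{L}_{\mathbf{g}} = \mathbf{0}$, substitute this together with $\alpha(t)\equiv\alpha$ into \cref{lm:dpp-bound}, and drop the resulting nonpositive term $\big[\tfrac{1}{2}(\beta^2+L_f)-\alpha\big]\Vert \mathbf{x}(t)-\mathbf{x}(t-1)\Vert^2$. The additional commentary you include about why linearity removes the queue-dependent step-size restriction is accurate but extraneous to the proof itself.
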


\begin{proof}
Note that if each $g_k(\mathbf{x})$ is a linear function, then we have $\mathbf{L}_{\mathbf{g}} = \mathbf{0}$. Fix $t\geq 0$. By \cref{lm:dpp-bound} with $\alpha(t) = \alpha$ and $\mathbf{L}_{\mathbf{g}} = \mathbf{0}$, we have
\begin{align*}
&\Delta(t) + F(\mathbf{x}(t)) \\
\leq &F(\mathbf{x}^{\ast}) + \alpha \big[\Vert \mathbf{x}^{\ast}- \mathbf{x}(t-1)\Vert^{2} - \Vert \mathbf{x}^{\ast} - \mathbf{x}(t)\Vert^{2}\big] +\frac{1}{2} \big[ \Vert \mathbf{G}(\mathbf{x}(t))\Vert^{2} -  \Vert \mathbf{G}(\mathbf{x}(t-1))\Vert^{2} \big] \\
& +\big[\frac{\beta^{2} + L_{f} }{2} -\alpha \big] \Vert \mathbf{x}(t) - \mathbf{x}(t-1)\Vert^{2}\\
\overset{(a)}{\leq} &F(\mathbf{x}^{\ast}) + \alpha \big[\Vert \mathbf{x}^{\ast}- \mathbf{x}(t-1)\Vert^{2} - \Vert \mathbf{x}^{\ast} - \mathbf{x}(t)\Vert^{2}\big] +\frac{1}{2} \big[ \Vert \mathbf{G}(\mathbf{x}(t))\Vert^{2} -  \Vert \mathbf{G}(\mathbf{x}(t-1))\Vert^{2} \big]
\end{align*}
where (a) follows from $\alpha >  \frac{1}{2}[\beta^2 + L_f]$.
\end{proof}

\section{Convergence Time Analysis of \cref{alg:new-alg} } \label{sec:convergence-time}

This section analyzes the convergence time of \cref{alg:new-alg} for convex program \eqref{eq:program-objective}-\eqref{eq:program-set-constraint}. In particular, the following two rules for choosing $\alpha(t)$ in \cref{alg:new-alg} are considered.
\begin{itemize}
\item Constant $\alpha(t)$: Choose algorithm parameters $\alpha(t)$ via
\begin{align}
\alpha(t) = \alpha >  \frac{1}{2}[\beta^2 + L_f], \forall t\geq 0 \label{eq:constant-alpha}
\end{align}
\item Non-decreasing $\alpha(t)$:  Choose algorithm parameters $\alpha(t)$ via
\begin{align}
\alpha(t) = \left\{ \begin{array}{ll}\frac{1}{2}\big[\beta^2+L_f + [\mathbf{Q}(0) + \mathbf{G}(\mathbf{x}(-1))]\tran \mathbf{L}_{\mathbf{g}} \big], & t=0\\ \max\big\{\alpha(t-1), \frac{1}{2}\big[\beta^2+L_f + [\mathbf{Q}(t) + \mathbf{G}(\mathbf{x}(t-1))]\tran \mathbf{L}_{\mathbf{g}} \big]\big\}, &t\geq 1\end{array}\right. \label{eq:increasing-alpha}
\end{align}
Note that part (2) of \cref{lm:virtual-queue} implies $\alpha(0)>0$, and hence 
$\alpha(t)>0, \forall t\geq 0$ since $\alpha(t)$ is a nondecreasing sequence.  
\end{itemize}

\subsection{Convex Programs with Linear $\mathbf{g}(\mathbf{x})$}
This subsection proves that if each $g_k(\mathbf{x})$ is a linear function, then it suffices to choose constant parameters $\alpha(t) = \alpha > \frac{1}{2}[\beta^2 + L_f]$ in \cref{alg:new-alg} to solve convex program \eqref{eq:program-objective}-\eqref{eq:program-set-constraint} with an $O(1/\epsilon)$ convergence time.

\begin{Thm} \label{thm:performance-linear}
Consider convex program \eqref{eq:program-objective}-\eqref{eq:program-set-constraint} under \cref{as:basic} and \cref{as:strong-duality} where each $g_k(\mathbf{x})$ is a linear function. Let $\mathbf{x}^\ast$ be an optimal solution. Let $\boldsymbol{\lambda}^\ast$ be a Lagrange multiplier vector satisfying \cref{as:strong-duality}. If we choose constant $\alpha(t)$ in \cref{alg:new-alg} according to \eqref{eq:constant-alpha}, then for all $t\geq 1$, we have
\begin{enumerate}
\item $F(\overline{\mathbf{x}}(t))  \leq  F(\mathbf{x}^{\ast}) + \frac{\alpha}{t}\Vert \mathbf{x}^{\ast} - \mathbf{x}(-1)\Vert^{2}$.
\item $G_k(\overline{\mathbf{x}}(t)) \leq \frac{1}{t} \big[ \Vert \boldsymbol{\lambda}^{\ast}\Vert + \sqrt{2\alpha} \Vert \mathbf{x}^\ast - \mathbf{x}(t-1)\Vert + \sqrt{\frac{\alpha}{\alpha-\frac{1}{2}\beta^2 - \frac{1}{2}L_f}}\Vert \mathbf{G}(\mathbf{x}^\ast)\Vert \big]$.
\end{enumerate}
where $\beta$ and $L_f$ are defined in \cref{as:basic}. That is, \cref{alg:new-alg} ensures error decays like $O(1/t)$ and provides an $\epsilon$-approximate solution with convergence time $O(1/\epsilon)$.
\end{Thm}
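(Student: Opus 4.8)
The plan is to telescope the drift-plus-penalty bound from \cref{cor:dpp-bound-linear} over $\tau \in \{0,1,\ldots,t-1\}$ and then extract both the objective bound and the constraint violation bound using convexity of $F$ and $G_k$ together with \cref{lm:obj-diff-bound-from-strong-duality}. The key observation is that, with constant $\alpha(t) = \alpha$, the terms $\alpha[\Vert\mathbf{x}^\ast-\mathbf{x}(\tau-1)\Vert^2 - \Vert\mathbf{x}^\ast-\mathbf{x}(\tau)\Vert^2]$ and $\frac{1}{2}[\Vert\mathbf{G}(\mathbf{x}(\tau))\Vert^2 - \Vert\mathbf{G}(\mathbf{x}(\tau-1))\Vert^2]$ both telescope, while $\Delta(\tau) = L(\tau+1) - L(\tau)$ telescopes to $L(t) - L(0) = \frac{1}{2}\Vert\mathbf{Q}(t)\Vert^2 - \frac{1}{2}\Vert\mathbf{Q}(0)\Vert^2$.

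First I would sum \cref{cor:dpp-bound-linear} to obtain
\[
\tfrac{1}{2}\Vert\mathbf{Q}(t)\Vert^2 - \tfrac{1}{2}\Vert\mathbf{Q}(0)\Vert^2 + \sum_{\tau=0}^{t-1} F(\mathbf{x}(\tau)) \le t F(\mathbf{x}^\ast) + \alpha\Vert\mathbf{x}^\ast-\mathbf{x}(-1)\Vert^2 - \alpha\Vert\mathbf{x}^\ast-\mathbf{x}(t-1)\Vert^2 + \tfrac{1}{2}\Vert\mathbf{G}(\mathbf{x}(t-1))\Vert^2 - \tfrac{1}{2}\Vert\mathbf{G}(\mathbf{x}(-1))\Vert^2.
\]
By part (3) of \cref{lm:virtual-queue}, $\Vert\mathbf{Q}(0)\Vert^2 \le \Vert\mathbf{G}(\mathbf{x}(-1))\Vert^2$ and $\Vert\mathbf{Q}(t)\Vert^2 \ge \Vert\mathbf{G}(\mathbf{x}(t-1))\Vert^2$ for $t\ge 1$, so the queue/constraint boundary terms cancel favorably and we get $\sum_{\tau=0}^{t-1} F(\mathbf{x}(\tau)) \le t F(\mathbf{x}^\ast) + \alpha\Vert\mathbf{x}^\ast-\mathbf{x}(-1)\Vert^2$; dividing by $t$ and using convexity of $F$ (Jensen's inequality applied to $\overline{\mathbf{x}}(t) = \frac{1}{t}\sum_{\tau=0}^{t-1}\mathbf{x}(\tau)$) yields part (1). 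For part (2), I would instead keep the $\frac{1}{2}\Vert\mathbf{Q}(t)\Vert^2$ term and combine the summed bound with \cref{lm:obj-diff-bound-from-strong-duality}, which gives $\sum_{\tau=0}^{t-1} F(\mathbf{x}(\tau)) \ge t F(\mathbf{x}^\ast) - \Vert\boldsymbol{\lambda}^\ast\Vert\,\Vert\mathbf{Q}(t)\Vert$. Substituting this lower bound produces a quadratic inequality in $\Vert\mathbf{Q}(t)\Vert$ of the form $\frac{1}{2}\Vert\mathbf{Q}(t)\Vert^2 \le \Vert\boldsymbol{\lambda}^\ast\Vert\,\Vert\mathbf{Q}(t)\Vert + \alpha\Vert\mathbf{x}^\ast-\mathbf{x}(-1)\Vert^2 + \tfrac{1}{2}\Vert\mathbf{G}(\mathbf{x}(t-1))\Vert^2$, but I actually need something sharper, so I would instead re-run the telescoping retaining the $-\alpha\Vert\mathbf{x}^\ast-\mathbf{x}(t-1)\Vert^2$ and using $\Vert\mathbf{G}(\mathbf{x}(t-1))\Vert \le \Vert\mathbf{Q}(t)\Vert$ does not quite close it—rather the cleaner route is to bound $\Vert\mathbf{Q}(t)\Vert$ directly.

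The sharper argument for part (2): keep more of the left side before discarding. From the telescoped inequality with the lower bound on $\sum F(\mathbf{x}(\tau))$ substituted in, and using $\Vert\mathbf{Q}(0)\Vert^2 \le \Vert\mathbf{G}(\mathbf{x}(-1))\Vert^2$, one obtains
\[
\tfrac{1}{2}\Vert\mathbf{Q}(t)\Vert^2 \le \Vert\boldsymbol{\lambda}^\ast\Vert\,\Vert\mathbf{Q}(t)\Vert + \alpha\Vert\mathbf{x}^\ast-\mathbf{x}(-1)\Vert^2 - \alpha\Vert\mathbf{x}^\ast-\mathbf{x}(t-1)\Vert^2 + \tfrac{1}{2}\Vert\mathbf{G}(\mathbf{x}(t-1))\Vert^2.
\]
Here the challenge—and I expect this to be the main obstacle—is controlling the leftover $\alpha\Vert\mathbf{x}^\ast-\mathbf{x}(-1)\Vert^2$ and $\tfrac{1}{2}\Vert\mathbf{G}(\mathbf{x}(t-1))\Vert^2$ terms and converting a bound on $\Vert\mathbf{Q}(t)\Vert$ into the stated bound on $G_k(\overline{\mathbf{x}}(t))$. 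For the latter, I would use \cref{lm:queue-constraint-inequality}, namely $Q_k(t) \ge \sum_{\tau=0}^{t-1} G_k(\mathbf{x}(\tau))$, together with convexity of $G_k$ (Jensen) to get $G_k(\overline{\mathbf{x}}(t)) \le \frac{1}{t}\sum_{\tau=0}^{t-1} G_k(\mathbf{x}(\tau)) \le \frac{1}{t} Q_k(t) \le \frac{1}{t}\Vert\mathbf{Q}(t)\Vert$, so everything reduces to bounding $\Vert\mathbf{Q}(t)\Vert$. To get the precise form with $\sqrt{2\alpha}\Vert\mathbf{x}^\ast-\mathbf{x}(t-1)\Vert$ and $\sqrt{\alpha/(\alpha-\tfrac12\beta^2-\tfrac12 L_f)}\Vert\mathbf{G}(\mathbf{x}^\ast)\Vert$, I anticipate needing to revisit \cref{lm:dpp-bound} at a single step $\tau$ (not telescoped) to replace $\Vert\mathbf{G}(\mathbf{x}(t-1))\Vert$ by a quantity involving $\Vert\mathbf{G}(\mathbf{x}^\ast)\Vert$—since $\Vert\mathbf{G}(\mathbf{x}^\ast)\Vert$ need not be zero (only $G_k(\mathbf{x}^\ast) \le 0$), the slack $\Vert\mathbf{G}(\mathbf{x}^\ast)\Vert$ enters through a one-step drift bound at $t-1$ where one sets $\mathbf{x} = \mathbf{x}^\ast$ in the strongly-convex optimality comparison and uses $\alpha - \tfrac12\beta^2 - \tfrac12 L_f > 0$ to absorb $\Vert\mathbf{x}(t-1)-\mathbf{x}(t-2)\Vert^2$. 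Finally, solving the resulting quadratic inequality $\tfrac12 Z^2 - \Vert\boldsymbol{\lambda}^\ast\Vert Z - C \le 0$ for $Z = \Vert\mathbf{Q}(t)\Vert$ via the quadratic formula and the elementary bound $\sqrt{a+b} \le \sqrt{a}+\sqrt{b}$ gives $\Vert\mathbf{Q}(t)\Vert \le \Vert\boldsymbol{\lambda}^\ast\Vert + \sqrt{2\alpha}\Vert\mathbf{x}^\ast-\mathbf{x}(t-1)\Vert + \sqrt{\alpha/(\alpha-\tfrac12\beta^2-\tfrac12 L_f)}\Vert\mathbf{G}(\mathbf{x}^\ast)\Vert$, and dividing by $t$ completes part (2).
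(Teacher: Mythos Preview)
Your treatment of Part~(1) matches the paper exactly: telescope \cref{cor:dpp-bound-linear}, use part~(3) of \cref{lm:virtual-queue} to cancel the boundary terms $\frac{1}{2}\Vert\mathbf{Q}(0)\Vert^2 - \frac{1}{2}\Vert\mathbf{G}(\mathbf{x}(-1))\Vert^2$ and $\frac{1}{2}\Vert\mathbf{G}(\mathbf{x}(t-1))\Vert^2 - \frac{1}{2}\Vert\mathbf{Q}(t)\Vert^2$, drop the remaining negative term, and apply Jensen.

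For Part~(2) you also correctly identify the overall architecture---keep the telescoped inequality with $-\frac{1}{2}\Vert\mathbf{Q}(t)\Vert^2$ on the right, insert the lower bound from \cref{lm:obj-diff-bound-from-strong-duality}, solve a quadratic in $\Vert\mathbf{Q}(t)\Vert$, and finish via Jensen plus \cref{lm:queue-constraint-inequality}. The gap is in how you propose to handle the residual $\frac{1}{2}\Vert\mathbf{G}(\mathbf{x}(t-1))\Vert^2 - \alpha\Vert\mathbf{x}^\ast - \mathbf{x}(t-1)\Vert^2$. Your plan to ``revisit \cref{lm:dpp-bound} at a single step $\tau$'' and invoke ``a one-step drift bound at $t-1$ where one sets $\mathbf{x}=\mathbf{x}^\ast$'' to absorb $\Vert\mathbf{x}(t-1)-\mathbf{x}(t-2)\Vert^2$ is not a concrete step: a single application of \cref{lm:dpp-bound} controls $\Delta(\tau)+F(\mathbf{x}(\tau))$, not $\Vert\mathbf{G}(\mathbf{x}(t-1))\Vert$, and there is no evident way to extract from it a bound of the required form.

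The paper's resolution is a purely algebraic one-liner, with no further drift analysis. Write
\[
\tfrac{1}{2}\Vert\mathbf{G}(\mathbf{x}(t-1))\Vert^2 = \tfrac{1}{2}\Vert(\mathbf{G}(\mathbf{x}(t-1)) - \mathbf{G}(\mathbf{x}^\ast)) + \mathbf{G}(\mathbf{x}^\ast)\Vert^2,
\]
expand, apply Cauchy--Schwarz and the $\beta$-Lipschitz continuity of $\mathbf{G}$ to get $\Vert\mathbf{G}(\mathbf{x}(t-1)) - \mathbf{G}(\mathbf{x}^\ast)\Vert \le \beta\Vert\mathbf{x}^\ast - \mathbf{x}(t-1)\Vert$, and then combine with the retained $-\alpha\Vert\mathbf{x}^\ast - \mathbf{x}(t-1)\Vert^2$ by completing the square. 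Since $\alpha > \frac{1}{2}\beta^2$, this gives
\[
\tfrac{1}{2}\Vert\mathbf{G}(\mathbf{x}(t-1))\Vert^2 - \alpha\Vert\mathbf{x}^\ast - \mathbf{x}(t-1)\Vert^2 \le \frac{\alpha}{2\alpha - \beta^2}\Vert\mathbf{G}(\mathbf{x}^\ast)\Vert^2,
\]
which is exactly the constant entering the quadratic inequality for $\Vert\mathbf{Q}(t)\Vert$. (Carrying this through yields $2\Vert\boldsymbol{\lambda}^\ast\Vert$, $\sqrt{2\alpha}\Vert\mathbf{x}^\ast - \mathbf{x}(-1)\Vert$, and denominator $\alpha - \frac{1}{2}\beta^2$ in the final bound; the constants printed in the theorem statement appear to contain typos.)
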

\begin{proof}~ 
\begin{enumerate}
\item By \cref{cor:dpp-bound-linear}, we have $F(\mathbf{x}(\tau))\leq F(\mathbf{x}^\ast) +\alpha \big[\Vert \mathbf{x}^{\ast}- \mathbf{x}(t-1)\Vert^{2} - \Vert \mathbf{x}^{\ast} - \mathbf{x}(\tau)\Vert^{2}\big]+\frac{1}{2} \big[ \Vert \mathbf{G}(\mathbf{x}(\tau))\Vert^{2} -  \Vert \mathbf{G}(\mathbf{x}(\tau-1))\Vert^{2} \big] - \Delta(\tau)$ for all $\tau\in\{0,1,2,\ldots\}$. Fix $t\geq 1$.  Summing over $\tau\in\{0,1,2,\ldots, t-1\}$ yields
\begin{align}
&\sum_{\tau=0}^{t-1}F(\mathbf{x}(\tau)) \nonumber\\
\leq& t F(\mathbf{x}^\ast) + \alpha \sum_{\tau=0}^{t-1}\big[\Vert \mathbf{x}^{\ast}- \mathbf{x}(\tau-1)\Vert^{2} - \Vert \mathbf{x}^{\ast} - \mathbf{x}(\tau)\Vert^{2}\big] \nonumber\\ &+\frac{1}{2} \sum_{\tau=0}^{t-1}\big[ \Vert \mathbf{G}(\mathbf{x}(\tau))\Vert^{2} -  \Vert \mathbf{G}(\mathbf{x}(\tau-1))\Vert^{2} \big] - \sum_{\tau=0}^{t-1}\Delta(\tau) \nonumber\\
\overset{(a)}{=}& t F(\mathbf{x}^\ast) + \alpha \Vert \mathbf{x}^{\ast}- \mathbf{x}(-1)\Vert^{2} -\alpha \Vert \mathbf{x}^{\ast} - \mathbf{x}(t-1)\Vert^{2} \nonumber\\ &+\frac{1}{2}\Vert \mathbf{G}(\mathbf{x}(t-1))\Vert^{2} -  \Vert \mathbf{G}(\mathbf{x}(-1))\Vert^{2} + \frac{1}{2}\Vert \mathbf{Q}(0)\Vert^2 - \frac{1}{2}\Vert \mathbf{Q}(t)\Vert^2 \nonumber\\
\overset{(b)}{\leq}&t F(\mathbf{x}^\ast) + \alpha \Vert \mathbf{x}^{\ast}- \mathbf{x}(-1)\Vert^{2} -\alpha \Vert \mathbf{x}^{\ast} - \mathbf{x}(t-1)\Vert^{2} +\frac{1}{2}\Vert \mathbf{G}(\mathbf{x}(t-1))\Vert^{2}  - \frac{1}{2}\Vert \mathbf{Q}(t)\Vert^2 \label{eq:pf-thm-performance-linear-eq1}
\end{align}
where (a) follows by recalling that $\Delta(\tau) =\frac{1}{2}\Vert \mathbf{Q}(\tau+1)\Vert^{2} - \frac{1}{2}\Vert \mathbf{Q}(\tau)\Vert^{2} $ and (b) follows from $\Vert \mathbf{Q}(0)\Vert^2 \leq \Vert \mathbf{G}(\mathbf{x}(-1))\Vert^2$ by part (3) in \cref{lm:virtual-queue}.

Recalling that $\Vert \mathbf{Q}(t)\Vert^2 \geq \Vert \mathbf{G}(\mathbf{x}(t-1))\Vert^2$ by part (3) in \cref{lm:virtual-queue} and ignoring a negative term $-\alpha \Vert \mathbf{x}^{\ast} - \mathbf{x}(t-1)\Vert^{2} $ on the right side of \eqref{eq:pf-thm-performance-linear-eq1} yields
\begin{align*}
\sum_{\tau=0}^{t-1}F(\mathbf{x}(\tau)) \leq t F(\mathbf{x}^\ast) + \alpha \Vert \mathbf{x}^{\ast}- \mathbf{x}(-1)\Vert^{2}
\end{align*}
Dividing both sides by $t$ and using Jensen's inequality for convex function $F(\mathbf{x})$ yields
\begin{align*}
F(\overline{\mathbf{x}}(t))  \leq  F(\mathbf{x}^{\ast}) + \frac{\alpha}{t}\Vert \mathbf{x}^{\ast} - \mathbf{x}(-1)\Vert^{2}.
\end{align*}

\item Fix $t\geq 1$. Note that \eqref{eq:pf-thm-performance-linear-eq1} can be written as
\begin{align}
&\sum_{\tau=0}^{t-1}F(\mathbf{x}(\tau)) \nonumber\\
\leq &t F(\mathbf{x}^\ast) + \alpha \Vert \mathbf{x}^{\ast}- \mathbf{x}(-1)\Vert^{2} -\alpha \Vert \mathbf{x}^{\ast} - \mathbf{x}(t-1)\Vert^{2} \nonumber \\ &+\frac{1}{2}\Vert \mathbf{G}(\mathbf{x}(t-1)) - \mathbf{G}(\mathbf{x}^\ast)+ \mathbf{G}(\mathbf{x}^\ast)\Vert^{2}  - \frac{1}{2}\Vert \mathbf{Q}(t)\Vert^2 \nonumber \\
=& t F(\mathbf{x}^\ast) + \alpha \Vert \mathbf{x}^{\ast}- \mathbf{x}(-1)\Vert^{2} -\alpha \Vert \mathbf{x}^{\ast} - \mathbf{x}(t-1)\Vert^{2} +\frac{1}{2}\Vert \mathbf{G}(\mathbf{x}(t-1)) - \mathbf{G}(\mathbf{x}^\ast)\Vert^{2}\nonumber \\ &+[\mathbf{G}(\mathbf{x}^\ast)]\tran [\mathbf{G}(\mathbf{x}(t-1)) - \mathbf{G}(\mathbf{x}^\ast)] +\frac{1}{2} \Vert \mathbf{G}(\mathbf{x}^\ast)\Vert^2 - \frac{1}{2}\Vert \mathbf{Q}(t)\Vert^2 \nonumber
\end{align}
\begin{align}
\overset{(a)}{\leq}& t F(\mathbf{x}^\ast) + \alpha \Vert \mathbf{x}^{\ast}- \mathbf{x}(-1)\Vert^{2} -\alpha \Vert \mathbf{x}^{\ast} - \mathbf{x}(t-1)\Vert^{2} +\frac{1}{2}\Vert \mathbf{G}(\mathbf{x}(t-1)) - \mathbf{G}(\mathbf{x}^\ast)\Vert^{2}\nonumber \\ &+\Vert \mathbf{G}(\mathbf{x}^\ast) \Vert \Vert\mathbf{G}(\mathbf{x}(t-1)) - \mathbf{G}(\mathbf{x}^\ast)\Vert  +\frac{1}{2} \Vert \mathbf{G}(\mathbf{x}^\ast)\Vert^2- \frac{1}{2}\Vert \mathbf{Q}(t)\Vert^2 \nonumber \\
\overset{(b)}{\leq} & t F(\mathbf{x}^\ast) + \alpha \Vert \mathbf{x}^{\ast}- \mathbf{x}(-1)\Vert^{2} -\alpha \Vert \mathbf{x}^{\ast} - \mathbf{x}(t-1)\Vert^{2} +\frac{1}{2}\beta^2\Vert \mathbf{x}^\ast - \mathbf{x}(t-1)\Vert^{2}\nonumber \\ &+\beta \Vert \mathbf{G}(\mathbf{x}^\ast) \Vert \Vert \mathbf{x}^\ast - \mathbf{x}(t-1)\Vert +\frac{1}{2} \Vert \mathbf{G}(\mathbf{x}^\ast)\Vert^2- \frac{1}{2}\Vert \mathbf{Q}(t)\Vert^2 \nonumber \\
=& t F(\mathbf{x}^\ast) + \alpha \Vert \mathbf{x}^{\ast}- \mathbf{x}(-1)\Vert^{2} - [\alpha - \frac{1}{2}\beta^2] \Big[ \Vert \mathbf{x}^\ast - \mathbf{x}(t-1)\Vert - \frac{\beta}{2\alpha-\beta^2} \Vert \mathbf{G}(\mathbf{x}^\ast)\Vert \Big]^2 \nonumber\\ &+\frac{\alpha}{2\alpha-\beta^2} \Vert \mathbf{G}(\mathbf{x}^\ast) \Vert^2 - \frac{1}{2}\Vert \mathbf{Q}(t)\Vert^2 \nonumber\\
\overset{(c)}{\leq}&t F(\mathbf{x}^\ast) + \alpha \Vert \mathbf{x}^{\ast}- \mathbf{x}(-1)\Vert^{2} +\frac{\alpha}{2\alpha-\beta^2} \Vert \mathbf{G}(\mathbf{x}^\ast) \Vert^2 - \frac{1}{2}\Vert \mathbf{Q}(t)\Vert^2 \label{eq:pf-thm-performance-linear-eq2}
\end{align}
where (a) follows from the Cauchy-Schwarz inequality; (b) follows from Lipschitz continuity of $\mathbf{G}(\mathbf{x})$ in \cref{as:basic}; and (c) follows from $\alpha > \frac{1}{2}[\beta^2 + L_f] \geq \frac{1}{2}\beta^2$.

By \cref{lm:obj-diff-bound-from-strong-duality}, we have
\begin{align}
\sum_{\tau=0}^{t-1} F(\mathbf{x}(\tau)) \geq t F(\mathbf{x}^\ast) -  \Vert \boldsymbol{\lambda}^\ast\Vert \Vert \mathbf{Q}(t)\Vert\label{eq:pf-thm-performance-linear-eq3}
\end{align}
Combining \eqref{eq:pf-thm-performance-linear-eq2} and \eqref{eq:pf-thm-performance-linear-eq3}, cancelling common terms and rearranging terms yields
\begin{align}
&\frac{1}{2}\Vert \mathbf{Q}(t)\Vert^2  -\Vert \boldsymbol{\lambda}^\ast\Vert \Vert \mathbf{Q}(t)\Vert - \alpha \Vert \mathbf{x}^{\ast}- \mathbf{x}(-1)\Vert^{2} -\frac{\alpha}{2\alpha-\beta^2} \Vert \mathbf{G}(\mathbf{x}^\ast) \Vert^2 \leq 0 \nonumber\\
\Rightarrow & \Big [ \Vert \mathbf{Q}(t)\Vert - \Vert \boldsymbol{\lambda}^\ast\Vert\Big]^2\leq \Vert \boldsymbol{\lambda}^\ast\Vert^2 + 2\alpha \Vert \mathbf{x}^{\ast}- \mathbf{x}(-1)\Vert^{2} +\frac{\alpha}{\alpha-\frac{1}{2}\beta^2} \Vert \mathbf{G}(\mathbf{x}^\ast) \Vert^2 \nonumber\\
\Rightarrow &\Vert \mathbf{Q}(t)\Vert \leq \Vert \boldsymbol{\lambda}^\ast\Vert +\sqrt{ \Vert \boldsymbol{\lambda}^\ast\Vert^2 + 2\alpha \Vert \mathbf{x}^{\ast}- \mathbf{x}(-1)\Vert^{2} +\frac{\alpha}{\alpha-\frac{1}{2}\beta^2} \Vert \mathbf{G}(\mathbf{x}^\ast) \Vert^2} \nonumber\\
\overset{(a)}{\Rightarrow}&\Vert \mathbf{Q}(t)\Vert \leq 2\Vert \boldsymbol{\lambda}^\ast\Vert + \sqrt{2\alpha} \Vert \mathbf{x}^{\ast}- \mathbf{x}(-1)\Vert +\sqrt{\frac{\alpha}{\alpha-\frac{1}{2}\beta^2}} \Vert \mathbf{G}(\mathbf{x}^\ast) \Vert \label{eq:pf-thm-performance-linear-eq4}
\end{align}
where (a) follows from the basic inequality $\sqrt{z_1 + z_2 + z_3}\leq \sqrt{z_1} + \sqrt{z_1} + \sqrt{z_3}$ for any $z_1, z_2, z_3\geq0$.

Fix $k\in\{1,2,\ldots, m\}$. By Jensen's inequality for convex function $G_k(\mathbf{x})$, we have
\begin{align*}
G_k(\overline{\mathbf{x}}(t)) \leq& \frac{1}{t}\sum_{\tau=0}^{t-1} G_k(\mathbf{x}(\tau))\\
\overset{(a)}{\leq}& \frac{1}{t}Q_k(t)
\end{align*}
\begin{align*}
\leq & \frac{1}{t} \Vert \mathbf{Q}(t)\Vert\\
\overset{(b)}{\leq} & \frac{1}{t} \Big[ 2\Vert \boldsymbol{\lambda}^\ast\Vert + \sqrt{2\alpha} \Vert \mathbf{x}^{\ast}- \mathbf{x}(-1)\Vert +\sqrt{\frac{\alpha}{\alpha-\frac{1}{2}\beta^2}} \Vert \mathbf{G}(\mathbf{x}^\ast) \Vert\Big]
\end{align*}
where (a) follows from \cref{lm:queue-constraint-inequality} and (b) follows from \eqref{eq:pf-thm-performance-linear-eq4}.
\end{enumerate}
\end{proof}

\subsection{Convex Programs with Possibly Non-linear $\mathbf{g}(\mathbf{x})$}
For convex program \eqref{eq:program-objective}-\eqref{eq:program-set-constraint} with possibly nonlinear $\mathbf{g}(\mathbf{x})$, the following assumption is further assumed:

\begin{Assumption} \label{as:boundedness}~
\begin{itemize}
\item There exists $C>0$ such that $\Vert \mathbf{G}(\mathbf{x}) \Vert \leq C$ for all $\mathbf{x}\in \mathcal{X}$.  
\item There exists $R>0$ such that $\Vert \mathbf{x} - \mathbf{y}\Vert\leq R$ for all $\mathbf{x}, \mathbf{y}\in \mathcal{X}$.
\end{itemize}
\end{Assumption}
Note that \cref{as:boundedness} holds when $\mathcal{X}$ is a compact set. 

This subsection proves that if convex program \eqref{eq:program-objective}-\eqref{eq:program-set-constraint} with possibly nonlinear $\mathbf{g}(\mathbf{x})$ satisfies Assumptions \ref{as:basic}-\ref{as:boundedness}, then it suffices to choose non-decreasing parameters $\alpha(t)$ according to \eqref{eq:increasing-alpha} in \cref{alg:new-alg} to solve convex program \eqref{eq:program-objective}-\eqref{eq:program-set-constraint} with an $O(1/\epsilon)$ convergence time.

\begin{Lem}\label{lm:non-linear-g}
Consider convex program \eqref{eq:program-objective}-\eqref{eq:program-set-constraint} under Assumptions \ref{as:basic}-\ref{as:boundedness}.  Let $\mathbf{x}^\ast$ be an optimal solution and  $\boldsymbol{\lambda}^\ast$ be a Lagrange multiplier vector satisfying \cref{as:strong-duality}. If we choose non-decreasing $\alpha(t)$ in \cref{alg:new-alg} according to \eqref{eq:increasing-alpha}, then we have
\begin{enumerate}
\item $\sum_{\tau=0}^{t} \alpha(\tau) \big[ \Vert \mathbf{x}^{\ast}- \mathbf{x}(\tau-1)\Vert^{2} - \Vert \mathbf{x}^{\ast} - \mathbf{x}(\tau)\Vert^{2}\big] \leq \alpha(t) R^2, \forall t\geq 0$;
\item $\sum_{\tau=0}^{t-1}F(\mathbf{x}(\tau)) \leq t F(\mathbf{x}^\ast) +\alpha(t-1)R^2 + \frac{1}{2}\Vert \mathbf{G}(\mathbf{x}(t-1))\Vert^{2}  - \frac{1}{2}\Vert \mathbf{Q}(t)\Vert^2, \forall t\geq 1$;
\item $\Vert \mathbf{Q}(t+1)\Vert \leq 2\Vert \boldsymbol{\lambda}^\ast\Vert + R\sqrt{2\alpha(t)} + C, \forall t\geq 0$;
\end{enumerate}
where  $R$ and $C$ are defined in \cref{as:boundedness}.
\end{Lem}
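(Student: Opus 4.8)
The plan is to establish the three parts in sequence, each leaning on the drift‑plus‑penalty bound of \cref{lm:dpp-bound} together with the virtual‑queue facts of \cref{lm:virtual-queue}.

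\textbf{Part (1).} I would use a weighted (Abel) telescoping argument. Write $b_\tau = \Vert \mathbf{x}^{\ast}-\mathbf{x}(\tau)\Vert^{2}$ for $\tau\geq -1$ and reindex $\sum_{\tau=0}^{t}\alpha(\tau) b_{\tau-1}=\sum_{s=-1}^{t-1}\alpha(s+1)b_s$, so that $\sum_{\tau=0}^{t}\alpha(\tau)[b_{\tau-1}-b_\tau]=\alpha(0)b_{-1}+\sum_{s=0}^{t-1}[\alpha(s+1)-\alpha(s)]b_s-\alpha(t)b_t$. The sequence $\alpha(t)$ built in \eqref{eq:increasing-alpha} is non-decreasing, so every coefficient $\alpha(s+1)-\alpha(s)$ is nonnegative; since $\mathbf{x}^{\ast},\mathbf{x}(\tau)\in\mathcal{X}$, \cref{as:boundedness} gives $b_\tau\leq R^2$; and $-\alpha(t)b_t\leq 0$. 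Hence the sum is at most $R^2\big[\alpha(0)+\sum_{s=0}^{t-1}(\alpha(s+1)-\alpha(s))\big]=\alpha(t)R^2$.

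\textbf{Part (2).} The key observation is that the rule \eqref{eq:increasing-alpha} was designed precisely so that $\alpha(t)\geq \tfrac{1}{2}\big[\beta^{2}+L_f+[\mathbf{Q}(t)+\mathbf{G}(\mathbf{x}(t-1))]\tran\mathbf{L}_{\mathbf{g}}\big]$ for every $t\geq 0$ (equality at $t=0$, and the $\max$ guarantees it for $t\geq 1$), so the coefficient of $\Vert\mathbf{x}(t)-\mathbf{x}(t-1)\Vert^{2}$ in \cref{lm:dpp-bound} is nonpositive and that term may be discarded, leaving $\Delta(t)+F(\mathbf{x}(t))\leq F(\mathbf{x}^{\ast})+\alpha(t)[\Vert\mathbf{x}^{\ast}-\mathbf{x}(t-1)\Vert^{2}-\Vert\mathbf{x}^{\ast}-\mathbf{x}(t)\Vert^{2}]+\tfrac12[\Vert\mathbf{G}(\mathbf{x}(t))\Vert^{2}-\Vert\mathbf{G}(\mathbf{x}(t-1))\Vert^{2}]$. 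Summing over $\tau\in\{0,\ldots,t-1\}$: the drift telescopes to $\tfrac12\Vert\mathbf{Q}(t)\Vert^{2}-\tfrac12\Vert\mathbf{Q}(0)\Vert^{2}$; the $\mathbf{G}$-differences telescope to $\tfrac12\Vert\mathbf{G}(\mathbf{x}(t-1))\Vert^{2}-\tfrac12\Vert\mathbf{G}(\mathbf{x}(-1))\Vert^{2}$; and the weighted difference of squared distances is at most $\alpha(t-1)R^2$ by part (1) applied at index $t-1$. Rearranging and then invoking part (3) of \cref{lm:virtual-queue} (namely $\Vert\mathbf{Q}(0)\Vert^{2}\leq\Vert\mathbf{G}(\mathbf{x}(-1))\Vert^{2}$) to drop the leftover nonpositive constant yields the stated inequality.

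\textbf{Part (3).} I would apply part (2) with $t$ replaced by $t+1$ to obtain $\sum_{\tau=0}^{t}F(\mathbf{x}(\tau))\leq (t+1)F(\mathbf{x}^{\ast})+\alpha(t)R^2+\tfrac12\Vert\mathbf{G}(\mathbf{x}(t))\Vert^{2}-\tfrac12\Vert\mathbf{Q}(t+1)\Vert^{2}$, combine it with the lower bound $\sum_{\tau=0}^{t}F(\mathbf{x}(\tau))\geq (t+1)F(\mathbf{x}^{\ast})-\Vert\boldsymbol{\lambda}^{\ast}\Vert\,\Vert\mathbf{Q}(t+1)\Vert$ from \cref{lm:obj-diff-bound-from-strong-duality}, cancel $(t+1)F(\mathbf{x}^{\ast})$, and bound $\Vert\mathbf{G}(\mathbf{x}(t))\Vert\leq C$ via \cref{as:boundedness}. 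This gives the scalar quadratic inequality $\tfrac12\Vert\mathbf{Q}(t+1)\Vert^{2}-\Vert\boldsymbol{\lambda}^{\ast}\Vert\,\Vert\mathbf{Q}(t+1)\Vert-\alpha(t)R^2-\tfrac12 C^2\leq 0$; completing the square gives $\big(\Vert\mathbf{Q}(t+1)\Vert-\Vert\boldsymbol{\lambda}^{\ast}\Vert\big)^{2}\leq\Vert\boldsymbol{\lambda}^{\ast}\Vert^{2}+2\alpha(t)R^2+C^2$, and the subadditivity bound $\sqrt{z_1+z_2+z_3}\leq\sqrt{z_1}+\sqrt{z_2}+\sqrt{z_3}$ for nonnegative $z_i$ finishes the proof. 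There is no deep obstacle here; the only care needed is bookkeeping — the index shifts in the weighted telescoping of part (1), its reuse at index $t-1$ inside part (2), and checking that \eqref{eq:increasing-alpha} genuinely annihilates the quadratic term of \cref{lm:dpp-bound} at every $t$, including $t=0$.
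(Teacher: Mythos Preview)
Your proposal is correct and follows essentially the same approach as the paper: the Abel/telescoping argument for part~(1), the use of \cref{lm:dpp-bound} with the $\alpha(t)$ rule \eqref{eq:increasing-alpha} to kill the quadratic term followed by telescoping and part~(1) for part~(2), and the combination of part~(2) with \cref{lm:obj-diff-bound-from-strong-duality} and \cref{as:boundedness} to obtain and solve the scalar quadratic inequality in part~(3) all match the paper's proof line by line.
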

\begin{proof}~
\begin{enumerate}
\item  This is obviously true when $t=0$. Fix $t\geq 1$. Note that
\begin{align*}
&\sum_{\tau=0}^{t} \alpha(\tau) \big[ \Vert \mathbf{x}^{\ast}- \mathbf{x}(\tau-1)\Vert^{2} - \Vert \mathbf{x}^{\ast} - \mathbf{x}(\tau)\Vert^{2}\big] \\
=&\alpha(0) \Vert \mathbf{x}^\ast - \mathbf{x}(-1)\Vert^2 + \sum_{\tau=0}^{t-1} [\alpha(\tau+1) - \alpha(\tau)] \Vert \mathbf{x}^\ast - \mathbf{x}(\tau)\Vert^2 - \alpha(t) \Vert \mathbf{x}^\ast - \mathbf{x}(t)\Vert^2\\
\overset{(a)}{\leq}& \alpha(0) R^2 + \sum_{\tau=0}^{t-1} [\alpha(\tau+1) - \alpha(\tau)]  R^2\\
=& \alpha(t)R^2
\end{align*}
where (a) follows because $\Vert \mathbf{x}^\ast - \mathbf{x}(\tau)\Vert\leq R, \forall \tau\geq 0$ by \cref{as:boundedness} and $\alpha(\tau+1) \geq \alpha(\tau), \forall \tau\geq 0$ by \eqref{eq:increasing-alpha}.
\item Fix $t\geq 1$.  By \cref{lm:dpp-bound}, for all $\tau\in\{0,1,2,\ldots\}$, we have
\begin{align*}
&\Delta(\tau) + F(\mathbf{x}(\tau)) \\
\leq &F(\mathbf{x}^{\ast}) + \alpha(\tau) \big[\Vert \mathbf{x}^{\ast}- \mathbf{x}(\tau-1)\Vert^{2} - \Vert \mathbf{x}^{\ast} - \mathbf{x}(\tau)\Vert^{2}\big] \\ &+\frac{1}{2} \big[ \Vert \mathbf{G}(\mathbf{x}(\tau))\Vert^{2} -  \Vert \mathbf{G}(\mathbf{x}(\tau-1))\Vert^{2} \big]\\
&+\big[\frac{\beta^{2} + L_{f} + [\mathbf{Q}(\tau) + \mathbf{G}(\mathbf{x}(\tau-1))]\tran \mathbf{L}_{\mathbf{g}}}{2} -\alpha(\tau) \big] \Vert \mathbf{x}(\tau) - \mathbf{x}(\tau-1)\Vert^{2}
\end{align*}
\begin{align*}
\overset{(a)}{\leq}&  F(\mathbf{x}^{\ast}) + \alpha(\tau) \big[\Vert \mathbf{x}^{\ast}- \mathbf{x}(\tau-1)\Vert^{2} - \Vert \mathbf{x}^{\ast} - \mathbf{x}(\tau)\Vert^{2}\big] \\ &+\frac{1}{2} \big[ \Vert \mathbf{G}(\mathbf{x}(\tau))\Vert^{2} -  \Vert \mathbf{G}(\mathbf{x}(\tau-1))\Vert^{2} \big]
\end{align*}
where (a) follows because each $\alpha(\tau)$ is chosen to guarantee $\frac{1}{2}[\beta^{2} + L_{f} + [\mathbf{Q}(\tau) + \mathbf{G}(\mathbf{x}(\tau-1))]\tran \mathbf{L}_{\mathbf{g}}] -\alpha(\tau) \leq 0.$

Summing over $\tau\in\{0,1,2,\ldots, t-1\}$ and rearranging terms yields
\begin{align}
&\sum_{\tau=0}^{t-1}F(\mathbf{x}(\tau)) \nonumber\\
\leq& t F(\mathbf{x}^\ast) +  \sum_{\tau=0}^{t-1}\alpha(\tau)\big[\Vert \mathbf{x}^{\ast}- \mathbf{x}(\tau-1)\Vert^{2} - \Vert \mathbf{x}^{\ast} - \mathbf{x}(\tau)\Vert^{2}\big] \nonumber\\ &+\frac{1}{2} \sum_{\tau=0}^{t-1}\big[ \Vert \mathbf{G}(\mathbf{x}(\tau))\Vert^{2} -  \Vert \mathbf{G}(\mathbf{x}(\tau-1))\Vert^{2} \big] - \sum_{\tau=0}^{t-1}\Delta(\tau) \nonumber\\
\overset{(a)}{\leq}& t F(\mathbf{x}^\ast) + \alpha(t-1)R^2 +\frac{1}{2}\Vert \mathbf{G}(\mathbf{x}(t-1))\Vert^{2} - \frac{1}{2} \Vert \mathbf{G}(\mathbf{x}(-1))\Vert^{2} + \frac{1}{2}\Vert \mathbf{Q}(0)\Vert^2 \nonumber \\&- \frac{1}{2}\Vert \mathbf{Q}(t)\Vert^2 \nonumber\\
\overset{(b)}{\leq}& t F(\mathbf{x}^\ast) +\alpha(t-1)R^2 + \frac{1}{2}\Vert \mathbf{G}(\mathbf{x}(t-1))\Vert^{2}  - \frac{1}{2}\Vert \mathbf{Q}(t)\Vert^2 \nonumber
\end{align}
where (a) follows from part (1) of this lemma and by recalling that $\Delta(\tau) =\frac{1}{2}\Vert \mathbf{Q}(\tau+1)\Vert^{2} - \frac{1}{2}\Vert \mathbf{Q}(\tau)\Vert^{2} $; and (b) follows because $\Vert \mathbf{Q}(0)\Vert^2 \leq \Vert \mathbf{G}(\mathbf{x}(-1))\Vert^2$ by part (3) in \cref{lm:virtual-queue}.
\item 
By part (2) of this lemma, we have 
\begin{align}
\sum_{\tau=0}^{t}F(\mathbf{x}(\tau)) \leq& (t+1) F(\mathbf{x}^\ast) +\alpha(t)R^2 +\frac{1}{2}\Vert \mathbf{G}(\mathbf{x}(t))\Vert^{2}  - \frac{1}{2}\Vert \mathbf{Q}(t+1)\Vert^2 \nonumber\\
\leq&(t+1) F(\mathbf{x}^\ast) +\alpha(t)R^2 +\frac{1}{2}C^{2}  - \frac{1}{2}\Vert \mathbf{Q}(t+1)\Vert^2
\label{eq:pf-lm-nonlinear-g-eq1}
\end{align}
where (a) follows from $\Vert \mathbf{G}(\mathbf{x}(t))\Vert\leq C$ by \cref{as:boundedness}.
By \cref{lm:obj-diff-bound-from-strong-duality}, we have
\begin{align}
\sum_{\tau=0}^{t} F(\mathbf{x}(\tau)) \geq (t+1) F(\mathbf{x}^\ast) -  \Vert \boldsymbol{\lambda}^\ast\Vert \Vert \mathbf{Q}(t+1)\Vert\label{eq:pf-lm-nonlinear-g-eq2}
\end{align}
Combining \eqref{eq:pf-lm-nonlinear-g-eq1} and \eqref{eq:pf-lm-nonlinear-g-eq2}, cancelling common terms and rearranging terms yields
\begin{align}
&\frac{1}{2}\Vert \mathbf{Q}(t+1)\Vert^2  -\Vert \boldsymbol{\lambda}^\ast\Vert \Vert \mathbf{Q}(t+1)\Vert - \alpha(t) R^2 - \frac{1}{2}C^2  \leq 0 \nonumber\\
\Rightarrow & \Big [ \Vert \mathbf{Q}(t+1)\Vert - \Vert \boldsymbol{\lambda}^\ast\Vert\Big]^2\leq \Vert \boldsymbol{\lambda}^\ast\Vert^2 + 2\alpha(t) R^2 + C^2 \nonumber\\
\Rightarrow &\Vert \mathbf{Q}(t+1)\Vert \leq \Vert \boldsymbol{\lambda}^\ast\Vert +\sqrt{ \Vert \boldsymbol{\lambda}^\ast\Vert^2 + 2\alpha(t) R^2 + C^2} \nonumber\\
\overset{(a)}{\Rightarrow}&\Vert \mathbf{Q}(t+1)\Vert \leq 2\Vert \boldsymbol{\lambda}^\ast\Vert + \sqrt{2\alpha(t)} R+C 
\end{align}
where (a) follows from the basic inequality $\sqrt{z_1 + z_2 + z_3}\leq \sqrt{z_1} + \sqrt{z_1} + \sqrt{z_3}$ for any $z_1, z_2, z_3\geq0$.

\end{enumerate}
\end{proof}

\begin{Lem}\label{lm:alpha-bound}~Consider convex program \eqref{eq:program-objective}-\eqref{eq:program-set-constraint} under Assumptions \ref{as:basic}-\ref{as:boundedness}. If we choose non-decreasing $\alpha(t)$ in \cref{alg:new-alg} according to \eqref{eq:increasing-alpha}, then $$\alpha(t)\leq \alpha^{\max}, \forall t\geq 0$$ with constant
\begin{align}
\alpha^{\max} = \Big[\sqrt{\frac{1}{2}\beta^2 + \frac{1}{2}L_f + \Vert \boldsymbol{\lambda}^\ast\Vert \Vert \mathbf{L}_{\mathbf{g}}\Vert +C \Vert \mathbf{L}_{\mathbf{g}}\Vert} + \frac{\sqrt{2}}{2}R\Vert \mathbf{L}_{\mathbf{g}}\Vert \Big]^2 \label{eq:alpha-bound}
\end{align}
where $\beta, L_f$ and $\mathbf{L}_\mathbf{g}$ are defined in \cref{as:basic}; and $R$ and $C$ are defined in \cref{as:boundedness}.
\end{Lem}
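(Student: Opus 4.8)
The plan is to prove the bound by induction on $t$, exploiting that rule \eqref{eq:increasing-alpha} makes $\alpha(t)$ the running maximum of the quantities $\frac{1}{2}\big[\beta^2 + L_f + [\mathbf{Q}(\tau)+\mathbf{G}(\mathbf{x}(\tau-1))]\tran \mathbf{L}_{\mathbf{g}}\big]$ over $\tau\le t$. The circularity to resolve is that $\alpha(t)$ depends on $\mathbf{Q}(t)$, whereas part (3) of \cref{lm:non-linear-g} bounds $\Vert\mathbf{Q}(t)\Vert$ only in terms of $\alpha(t-1)$; the induction is precisely what breaks this loop. Throughout I would abbreviate $a := \frac{1}{2}\beta^2 + \frac{1}{2}L_f + \Vert\boldsymbol{\lambda}^\ast\Vert\Vert\mathbf{L}_{\mathbf{g}}\Vert + C\Vert\mathbf{L}_{\mathbf{g}}\Vert$ and $b := \frac{\sqrt{2}}{2}R\Vert\mathbf{L}_{\mathbf{g}}\Vert$, so that $\alpha^{\max} = (\sqrt{a}+b)^2$ by \eqref{eq:alpha-bound}.

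For the base case $t=0$, I would combine the Cauchy--Schwarz inequality with part (3) of \cref{lm:virtual-queue} (which gives $\Vert\mathbf{Q}(0)\Vert\le\Vert\mathbf{G}(\mathbf{x}(-1))\Vert$) and \cref{as:boundedness} ($\Vert\mathbf{G}(\cdot)\Vert\le C$) to get $[\mathbf{Q}(0)+\mathbf{G}(\mathbf{x}(-1))]\tran\mathbf{L}_{\mathbf{g}}\le \Vert\mathbf{Q}(0)+\mathbf{G}(\mathbf{x}(-1))\Vert\,\Vert\mathbf{L}_{\mathbf{g}}\Vert \le 2C\Vert\mathbf{L}_{\mathbf{g}}\Vert$, hence $\alpha(0)\le \frac{1}{2}\beta^2+\frac{1}{2} L_f+C\Vert\mathbf{L}_{\mathbf{g}}\Vert$. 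This is at most $\alpha^{\max}$ since $\alpha^{\max}=(\sqrt{a}+b)^2\ge a\ge \frac{1}{2}\beta^2+\frac{1}{2} L_f+C\Vert\mathbf{L}_{\mathbf{g}}\Vert$.

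For the inductive step, assume $\alpha(\tau)\le\alpha^{\max}$ for all $\tau\le t-1$ with $t\ge1$. Since $\alpha(t)=\max\{\alpha(t-1),\,\frac{1}{2}[\beta^2+L_f+[\mathbf{Q}(t)+\mathbf{G}(\mathbf{x}(t-1))]\tran\mathbf{L}_{\mathbf{g}}]\}$ and the first argument is already $\le\alpha^{\max}$ by hypothesis, it suffices to bound the second argument. Cauchy--Schwarz and $\Vert\mathbf{G}(\mathbf{x}(t-1))\Vert\le C$ bound it by $\frac{1}{2}\beta^2+\frac{1}{2} L_f+\frac{1}{2}\big(\Vert\mathbf{Q}(t)\Vert+C\big)\Vert\mathbf{L}_{\mathbf{g}}\Vert$; then part (3) of \cref{lm:non-linear-g} applied with index $t-1$, together with $\alpha(t-1)\le\alpha^{\max}$, gives $\Vert\mathbf{Q}(t)\Vert\le 2\Vert\boldsymbol{\lambda}^\ast\Vert+R\sqrt{2\alpha^{\max}}+C$. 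Substituting, the second argument is at most $a+b\sqrt{\alpha^{\max}}=a+b(\sqrt{a}+b)=a+b\sqrt{a}+b^2\le a+2b\sqrt{a}+b^2=\alpha^{\max}$, which closes the induction.

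The one genuinely delicate point is this final algebraic verification: $\alpha^{\max}$ is not arbitrary but is chosen to be (an upper bound on) the fixed point of the self-referential map $\alpha\mapsto a+b\sqrt{\alpha}$ that emerges from feeding part (3) of \cref{lm:non-linear-g} back into the $\alpha$-update, and one must check that $(\sqrt{a}+b)^2$ indeed dominates $a+b\sqrt{(\sqrt{a}+b)^2}$, which collapses to the trivial inequality $b\sqrt{a}\ge0$. Everything else is routine bookkeeping with the triangle inequality and Cauchy--Schwarz; I would also remark in passing that nonnegativity of $\mathbf{Q}(\tau)+\mathbf{G}(\mathbf{x}(\tau-1))$ from part (2) of \cref{lm:virtual-queue} (and of the entries of $\mathbf{L}_{\mathbf{g}}$) keeps the inner-product terms well-signed, though only their Cauchy--Schwarz upper bound is strictly used.
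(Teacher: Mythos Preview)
Your proposal is correct and follows essentially the same approach as the paper's proof: induction on $t$, with the base case handled via $\Vert\mathbf{Q}(0)\Vert\le\Vert\mathbf{G}(\mathbf{x}(-1))\Vert\le C$ and the inductive step via part (3) of \cref{lm:non-linear-g} to bound $\Vert\mathbf{Q}(t)\Vert$ in terms of $\alpha(t-1)\le\alpha^{\max}$, followed by the same algebraic verification that $a+b\sqrt{\alpha^{\max}}\le\alpha^{\max}$. Your use of the abbreviations $a,b$ and the explicit ``fixed-point'' framing make the final step cleaner than the paper's presentation, but the argument is the same.
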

\begin{proof} This lemma can be proven by induction as follows.  Note that by \eqref{eq:increasing-alpha}, we have
\begin{align*}
\alpha(0) =& \frac{1}{2} \beta^2 + \frac{1}{2}L_f + \frac{1}{2}[\mathbf{Q}(0) + \mathbf{G}(\mathbf{x}(-1))]\tran \mathbf{L}_{\mathbf{g}}] \\
\overset{(a)}{\leq}&\frac{1}{2}\beta^2 + \frac{1}{2}L_f +  \frac{1}{2}\Vert\mathbf{Q}(0) + \mathbf{G}(\mathbf{x}(-1))\Vert \Vert \mathbf{L}_{\mathbf{g}}\Vert\\
\overset{(b)}{\leq} & \frac{1}{2}\beta^2 + \frac{1}{2}L_f +  C\Vert \mathbf{L}_{\mathbf{g}}\Vert\\
\leq & \alpha^{\max}
\end{align*}
where (a) follows from the Cauchy-Schwarz inequality; and (b) follows from $\Vert\mathbf{Q}(0) + \mathbf{G}(\mathbf{x}(-1))\Vert \leq \Vert\mathbf{Q}(0)\Vert + \Vert\mathbf{G}(\mathbf{x}(-1))\Vert \leq 2\Vert\mathbf{G}(\mathbf{x}(-1))\Vert \leq 2C$ where the second inequality follows from part (3) of \cref{lm:virtual-queue} and the third inequality follows from \cref{as:boundedness}. Thus, we have $\alpha(0)\leq \alpha^{\max}$. 

Now assume $\alpha(t)\leq \alpha^{\max}$ holds for $t= t_0$ and consider $t = t_0 +1$.  By  \eqref{eq:increasing-alpha}, $\alpha(t_0+1)$ is given by  
\begin{align*}
\alpha(t_0+1) = \max\big\{\alpha(t_0), \frac{1}{2}\big[\beta^2+L_f + [\mathbf{Q}(t_0+1) + \mathbf{G}(\mathbf{x}(t_0))]\tran \mathbf{L}_{\mathbf{g}} \big]\big\}
\end{align*}
Since $\alpha(t_0)\leq \alpha^{\max}$ by induction hypothesis, to prove $\alpha(t_0+1) \leq \alpha^{\max}$, it remains to prove 
\begin{align*}
\frac{1}{2}\big[\beta^2+L_f + [\mathbf{Q}(t_0+1) + \mathbf{G}(\mathbf{x}(t_0))]\tran \mathbf{L}_{\mathbf{g}} \big] \leq \alpha^{\max}
\end{align*}
By part (3) of \cref{lm:non-linear-g}, we have
\begin{align}
\Vert \mathbf{Q}(t_0+1)\Vert \leq& 2\Vert \boldsymbol{\lambda}^\ast\Vert + R\sqrt{2\alpha(t_0)} + C\nonumber \\
\overset{(a)}{\leq}& 2\Vert \boldsymbol{\lambda}^\ast\Vert + R\sqrt{2\alpha^{\max}} + C\label{eq:pf-alpha-bound-eq1}
\end{align}
where (a) follows the hypothesis in the induction.
Thus, we have
\begin{align*}
&\frac{1}{2}\big[\beta^2+L_f + [\mathbf{Q}(t_0+1) + \mathbf{G}(\mathbf{x}(t_0))]\tran \mathbf{L}_{\mathbf{g}} \big]\\
\overset{(a)}{\leq} & \frac{1}{2}\beta^2 + \frac{1}{2}L_f +  \frac{1}{2} \Vert\mathbf{Q}(t_0+1) + \mathbf{G}(\mathbf{x}(t_0))\Vert \Vert \mathbf{L}_{\mathbf{g}}\Vert\\
\overset{(b)}{\leq} & \frac{1}{2}\beta^2 + \frac{1}{2}L_f +  \frac{1}{2} \Vert\mathbf{Q}(t_0+1)\Vert\Vert \mathbf{L}_{\mathbf{g}}\Vert +\Vert \mathbf{G}(\mathbf{x}(t_0))\Vert \Vert \mathbf{L}_{\mathbf{g}}\Vert\\
\overset{(c)}{\leq}& \frac{1}{2}\beta^2 + \frac{1}{2}L_f + \frac{1}{2}\big[2\Vert \boldsymbol{\lambda}^\ast\Vert + R\sqrt{2\alpha^{\max}} + C] \Vert \mathbf{L}_{\mathbf{g}}\Vert+ \frac{1}{2}C\Vert \mathbf{L}_{\mathbf{g}}\Vert\\
=& \frac{1}{2}\beta^2 + \frac{1}{2}L_f  + \Vert \boldsymbol{\lambda}^\ast\Vert \Vert \mathbf{L}_{\mathbf{g}}\Vert+ C \Vert \mathbf{L}_{\mathbf{g}}\Vert + \frac{\sqrt{2}}{2}R \Vert \mathbf{L}_{\mathbf{g}}\Vert \sqrt{\alpha^{\max}}\\
\overset{(d)}{=} & \frac{1}{2}\beta^2 + \frac{1}{2}L_f  + \Vert \boldsymbol{\lambda}^\ast\Vert \Vert \mathbf{L}_{\mathbf{g}}\Vert+ C \Vert \mathbf{L}_{\mathbf{g}}\Vert  + \big[\frac{\sqrt{2}}{2}R\Vert \mathbf{L}_{\mathbf{g}}\Vert\big]^2\\ 
& + \frac{\sqrt{2}}{2}R \Vert \mathbf{L}_{\mathbf{g}}\Vert \sqrt{\frac{1}{2}\beta^2 + \frac{1}{2}L_f + \Vert \boldsymbol{\lambda}^\ast\Vert \Vert \mathbf{L}_{\mathbf{g}}\Vert +C \Vert \mathbf{L}_{\mathbf{g}}\Vert}\\
\overset{(e)}{\leq} & \Big[\sqrt{\frac{1}{2}\beta^2 + \frac{1}{2}L_f + \Vert \boldsymbol{\lambda}^\ast\Vert \Vert \mathbf{L}_{\mathbf{g}}\Vert +C \Vert \mathbf{L}_{\mathbf{g}}\Vert} + \frac{\sqrt{2}}{2}R\Vert \mathbf{L}_{\mathbf{g}}\Vert \Big]^2\\
=& \alpha^{\max}
\end{align*}
where (a) follows from the Cauchy-Schwarz inequality; (b) follows from triangle inequality; (c) follows from \eqref{eq:pf-alpha-bound-eq1} and $\Vert \mathbf{G}(\mathbf{x}(t_0))\Vert\leq C$ by \cref{as:boundedness}; (d) follows by substituting $\alpha^{\max} = \Big[\sqrt{\frac{1}{2}\beta^2 + \frac{1}{2}L_f + \Vert \boldsymbol{\lambda}^\ast\Vert \Vert \mathbf{L}_{\mathbf{g}}\Vert +C \Vert \mathbf{L}_{\mathbf{g}}\Vert} + \frac{\sqrt{2}}{2}R\Vert \mathbf{L}_{\mathbf{g}}\Vert \Big]^2$; and (e) follow from the basic inequality $z_1^2 +z_2^2 + z_1 z_2 \leq (z_1+z_2)^2$ for any $z_1, z_2\geq 0$.

Thus, we have $\alpha(t_0+1) \leq \alpha^{\max}$.  This lemma follows by induction.
\end{proof}

The next theorem summarizes the $O(1/\epsilon)$ convergence time of \cref{alg:new-alg} for convex program \eqref{eq:program-objective}-\eqref{eq:program-set-constraint} with possibly nonlinear $g_k(\mathbf{x})$.
\begin{Thm} \label{thm:performance-nonlinear}
Consider convex program \eqref{eq:program-objective}-\eqref{eq:program-set-constraint} under Assumptions \ref{as:basic}- \ref{as:boundedness} with possibly nonlinear $g_k(\mathbf{x})$. Let $\mathbf{x}^\ast$ be an optimal solution and  $\boldsymbol{\lambda}^\ast$ be a Lagrange multiplier vector satisfying \cref{as:strong-duality}. If we choose non-decreasing $\alpha(t)$ in \cref{alg:new-alg} according to \eqref{eq:increasing-alpha}, then for all $t\geq 1$, we have
\begin{enumerate}
\item $F(\overline{\mathbf{x}}(t))  \leq  F(\mathbf{x}^{\ast}) + \frac{\alpha^{\max}}{t} R^{2}$.
\item $G_k(\overline{\mathbf{x}}(t)) \leq \frac{1}{t} \big[ \Vert \boldsymbol{\lambda}^\ast\Vert + R\sqrt{2\alpha^{\max}} +C \big]$.
\end{enumerate}
where $\alpha^{\max}$ is defined in \cref{lm:alpha-bound}; and $R$ and $C$ are defined in \cref{as:boundedness}. That is, \cref{alg:new-alg} ensures error decays like $O(1/t)$ and provides an $\epsilon$-approximate solution with convergence time $O(1/\epsilon)$.
\end{Thm}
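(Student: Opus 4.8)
The plan is to mimic the structure of the proof of Theorem~\ref{thm:performance-linear}, but with the uniform bound $\alpha(t)\leq\alpha^{\max}$ from Lemma~\ref{lm:alpha-bound} standing in for the fixed constant $\alpha$, and with Lemma~\ref{lm:non-linear-g} supplying the telescoped drift-plus-penalty estimates that were obtained by Corollary~\ref{cor:dpp-bound-linear} in the linear case. Fix $t\geq 1$ throughout.

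For part (1), I would start from part~(2) of Lemma~\ref{lm:non-linear-g}, namely
\[
\sum_{\tau=0}^{t-1}F(\mathbf{x}(\tau)) \leq t F(\mathbf{x}^\ast) +\alpha(t-1)R^2 + \tfrac{1}{2}\Vert \mathbf{G}(\mathbf{x}(t-1))\Vert^{2}  - \tfrac{1}{2}\Vert \mathbf{Q}(t)\Vert^2 .
\]
Then I would use part~(3) of Lemma~\ref{lm:virtual-queue}, which gives $\Vert\mathbf{Q}(t)\Vert^2\geq\Vert\mathbf{G}(\mathbf{x}(t-1))\Vert^2$, to kill the two rightmost terms, leaving $\sum_{\tau=0}^{t-1}F(\mathbf{x}(\tau)) \leq t F(\mathbf{x}^\ast) +\alpha(t-1)R^2$. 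Next, apply Lemma~\ref{lm:alpha-bound} to replace $\alpha(t-1)$ by $\alpha^{\max}$, divide by $t$, and invoke Jensen's inequality for the convex function $F$ together with the definition $\overline{\mathbf{x}}(t)=\frac{1}{t}\sum_{\tau=0}^{t-1}\mathbf{x}(\tau)$ to conclude $F(\overline{\mathbf{x}}(t))\leq F(\mathbf{x}^\ast)+\frac{\alpha^{\max}}{t}R^2$.

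For part (2), I would combine part~(3) of Lemma~\ref{lm:non-linear-g}, evaluated at index $t-1$, which yields $\Vert\mathbf{Q}(t)\Vert\leq 2\Vert\boldsymbol{\lambda}^\ast\Vert + R\sqrt{2\alpha(t-1)} + C$, with Lemma~\ref{lm:alpha-bound} to get $\Vert\mathbf{Q}(t)\Vert\leq 2\Vert\boldsymbol{\lambda}^\ast\Vert + R\sqrt{2\alpha^{\max}} + C$. Then, as in Theorem~\ref{thm:performance-linear}, for each fixed $k$, Jensen's inequality for the convex function $G_k$ gives $G_k(\overline{\mathbf{x}}(t))\leq\frac{1}{t}\sum_{\tau=0}^{t-1}G_k(\mathbf{x}(\tau))$, which is at most $\frac{1}{t}Q_k(t)$ by Lemma~\ref{lm:queue-constraint-inequality}, hence at most $\frac{1}{t}\Vert\mathbf{Q}(t)\Vert$, and the claimed bound follows. (Note there is a harmless slack of a factor $2$ in front of $\Vert\boldsymbol{\lambda}^\ast\Vert$ between what Lemma~\ref{lm:non-linear-g}(3) delivers and what the theorem statement asks; I would simply keep the coefficient from the statement — presumably the intended reading is that the constant $\Vert\boldsymbol{\lambda}^\ast\Vert$ in the theorem should match the $2\Vert\boldsymbol{\lambda}^\ast\Vert$ of the lemma, and I would write it consistently.)

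The only real subtlety — not so much an obstacle as a point requiring care — is the interplay between Lemma~\ref{lm:alpha-bound} and Lemma~\ref{lm:non-linear-g}(3): the former is proved by an induction that already consumes the latter, so by the time we reach this theorem both bounds are available unconditionally and may be chained freely. Everything else is a direct assembly of the cited lemmas with Jensen's inequality, and no new estimate is needed.
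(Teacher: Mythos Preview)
Your proposal is correct and matches the paper's proof essentially line for line: part~(1) is exactly Lemma~\ref{lm:non-linear-g}(2) combined with Lemma~\ref{lm:virtual-queue}(3) and Lemma~\ref{lm:alpha-bound}, followed by Jensen; part~(2) is Jensen, Lemma~\ref{lm:queue-constraint-inequality}, and Lemma~\ref{lm:non-linear-g}(3) with Lemma~\ref{lm:alpha-bound}. You are also right about the coefficient discrepancy --- the paper's own proof ends with $2\Vert\boldsymbol{\lambda}^\ast\Vert$ rather than the $\Vert\boldsymbol{\lambda}^\ast\Vert$ printed in the statement, so this is a typo in the theorem, not a gap in your argument.
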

\begin{proof}~
\begin{enumerate}
\item  Fix $t\geq 1$. By part (2) of \cref{lm:non-linear-g}, we have
\begin{align*}
\sum_{\tau=0}^{t-1}F(\mathbf{x}(\tau)) \leq& t F(\mathbf{x}^\ast) +\alpha(t-1)R^2 + \frac{1}{2}\Vert \mathbf{G}(\mathbf{x}(t-1))\Vert^{2}  - \frac{1}{2}\Vert \mathbf{Q}(t)\Vert^2\\
\overset{(a)}{\leq}&t F(\mathbf{x}^\ast) +\alpha^{\max}R^2 
\end{align*}
where (a) follows from $\alpha(t-1)\leq \alpha^{\max}$ by \cref{lm:alpha-bound} and $\Vert \mathbf{Q}(t)\Vert\geq \Vert \mathbf{G}(\mathbf{x}(t-1))\Vert $ by \cref{lm:virtual-queue}.

Dividing both sides by $t$ and using Jensen's inequality for convex function $F(\mathbf{x})$ yields $F(\overline{\mathbf{x}}(t))  \leq  F(\mathbf{x}^{\ast}) + \frac{\alpha^{\max}}{t} R^2$.

\item 
Fix $t\geq 1$ and $k\in\{1,2,\ldots,m\}$. Recall that $\overline{\mathbf{x}}(t) = \frac{1}{t}\sum_{\tau=0}^{t-1} \mathbf{x}(\tau)$. Thus, 
\begin{align*}
G_k (\overline{\mathbf{x}}(t)) &\overset{(a)}{\leq} \frac{1}{t} \sum_{\tau=0}^{t-1} G_k(\mathbf{x}(\tau)) \\
 &\overset{(b)}{\leq} \frac{Q_k(t)}{t} \\
 &\leq \frac{\Vert \mathbf{Q}(t)\Vert}{t}\\
 &\overset{(c)}{\leq} \frac{1}{t} \big( 2 \Vert \boldsymbol{\lambda}^\ast \Vert + R\sqrt{2\alpha^{\max}} + C \big),
 \end{align*}
where (a) follows from the convexity of $g_k(\mathbf{x}), k\in\{1,2,\ldots,m\}$ and Jensen's inequality; (b) follows from \cref{lm:queue-constraint-inequality}; and (c) follows because $\Vert \mathbf{Q}(t)\Vert \leq 2\Vert \boldsymbol{\lambda}^\ast\Vert + R\sqrt{2\alpha(t-1)} + C$ by part (3) of \cref{lm:non-linear-g} and $\alpha(t-1) \leq \alpha^{\max}$ by \cref{lm:alpha-bound}.
\end{enumerate}
\end{proof}

\section{Numerical Experiment: Minimum Variance Portfolio with Norm Constraints}

\subsection{Minimum Variance Portfolio with the $l_2$ Norm Constraint}

Consider the following constrained smooth optimization
\begin{align*}
\min~~ &  \mathbf{x}\tran \mathbf{M}\mathbf{x}\\
\text{s.t.} \quad  & \sum_{i=1}^n x_i = 1\\
			 & \Vert \mathbf{x}\Vert^2 \leq b \\
			 & 0\leq x_i\leq 1, \forall i=\{1,2,\ldots, n\}
\end{align*}
where $\mathbf{x}$ is the weight vector of $n$ assets and $\mathbf{M}$ is the correlation matrix of all assets. This problem is known as global minimum variance portfolio under flexible norm constraints (GMV-N)  and the $l_2$-norm constraint $\Vert \mathbf{x}\Vert^2 \leq b$ is imposed to avoid a solution $\mathbf{x}$ that concentrates in low volatility assets.  For example, in the special case maximum decorrelation portfolio, we choose $b = 3/n$ in the $l_2$-norm constraint  \cite{Kremer17AOR}. 

Without loss of optimality, we can replace the equality constraint $\sum_{i=1}^n x_i = 1$ with an inequality constraint $\sum_{i=1}^n x_i \geq 1$ in the above formulation to obtain an equivalent reformulation. \footnote{This is because if we relax $\sum_{i=1}^n x_i = 1$ by $\sum_{i=1}^n x_i \geq 1$, the optimal solution $\mathbf{x}^\ast$ to the relaxed problem must satisfy $\sum_{i=1}^n x_i^\ast = 1$.}  This equivalent reformulation is a special case of problem \eqref{eq:program-objective}-\eqref{eq:program-set-constraint} with $\tilde{f}(\mathbf{x})\equiv 0$ and $\tilde{\mathbf{g}}(\mathbf{x}) \equiv 0$. In general, for any convex programs with a linear equality constraint $h(\mathbf{x})= 0$, we can always replace the equality constraint with two convex inequality constraints $h(\mathbf{x})\leq 0$ and $h(\mathbf{x})\geq 0$; and reformulate the convex programs into the general form \eqref{eq:program-objective}-\eqref{eq:program-set-constraint}.  In fact, if the convex program has a linear equality constraint $h(\mathbf{x})= 0$, we can modify the corresponding virtual queue in \cref{alg:new-alg} as $Q_k(t+1) = Q_k + h(\mathbf{x}(t))$ at each iteration to solve it directly. (This is also a property owned by \cref{alg:general-alg} to solve convex programs with linear equality constraints, see e.g., footnote 2 in \cite{YuNeely17SIOPT}.)

Since $\mathbf{M}$ is not diagonal, the objective function is not separable and hence at each iteration the update of $\mathbf{x}(t)$ in \cref{alg:general-alg} requires to solve an $n$-dimensional set constrained quadratic program, which can have huge complexity when $n$ is large.  In contrast, the update of $\mathbf{x}(t)$ in \cref{alg:new-alg} has a closed form update for each coordinate by \cref{lm:x-update-proj-implementation}.

In the numerical experiment, we take $n=500$, $b=3/n$ and generate correlation matrix $\mathbf{M} = [\text{Diag}(\mathbf{N}\tran \mathbf{N})]^{-1/2}\mathbf{N}\tran \mathbf{N}[\text{Diag}(\mathbf{N}\tran \mathbf{N})]^{-1/2}$ where $N$ is an $n\times n$ matrix follows the standard Gaussian distribution. We run both \cref{alg:general-alg} and \cref{alg:new-alg} with the same initial point $\mathbf{x}(0) = \mathbf{0}$.  \cref{fig:smooth_obj} and \cref{fig:smooth_cons} show that both algorithms have quite similar convergence performance as observed in the zoom-in subfigures.  However, when implementing both algorithms using MATLAB in a PC with a 4 core 2.7GHz Intel i7 CPU and 16GB Memory, each iteration of \cref{alg:new-alg} only takes around $1.5$ milliseconds while each iteration of \cref{alg:general-alg} takes around $270$ milliseconds.  (Note that our implementation uses quadprog in MATLAB to solve the box constrained quadratic program involved in each iteration of \cref{alg:general-alg}.) Thus, \cref{alg:new-alg} is $180$ times faster than  \cref{alg:general-alg} in this example.

\begin{figure}[htbp]
\centering
   \includegraphics[width=0.8\textwidth,height=0.8\textheight,keepaspectratio=true]{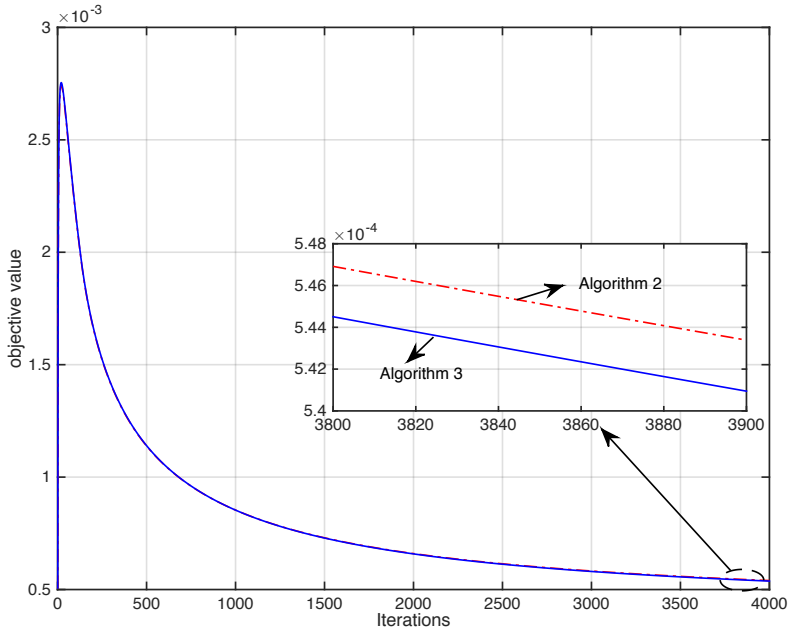} 
   \caption{Minimum variance portfolio with the $l_2$ norm constraint:  objective value performance.}
   \label{fig:smooth_obj}
\end{figure}

\begin{figure}[htbp]
\centering
   \includegraphics[width=0.8\textwidth,height=0.8\textheight,keepaspectratio=true]{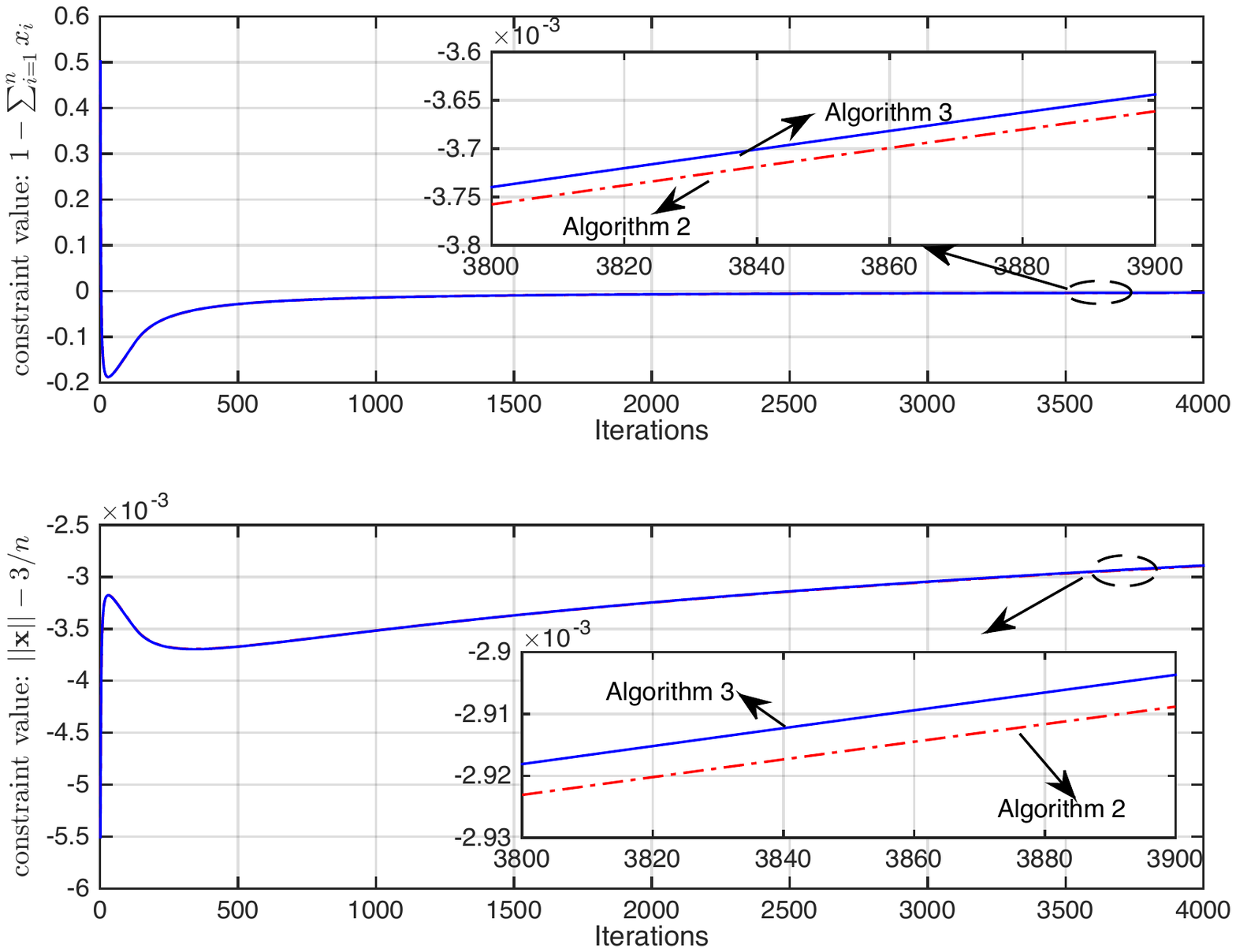} 
   \caption{Minimum variance portfolio with the $l_2$ norm constraint:  constraint value performance.}
   \label{fig:smooth_cons}
\end{figure}

\subsection{Minimum Variance Portfolio with the $l_1$ Norm Constraint}
Consider the following constrained non-smooth optimization
\begin{align*}
\min~~ &  \mathbf{x}\tran \mathbf{M}\mathbf{x}\\
\text{s.t.} \quad  & \sum_{i=1}^n x_i = 1\\
			 & \Vert \mathbf{x}\Vert_1 \leq b
\end{align*}
where $\mathbf{x}$ is the weight vector of $n$ assets and $\mathbf{M}$ is the correlation matrix of all assets. Note that each component $x_i\in \mathbb{R}$ can be possibly negative by assuming that we can sell short the considered assets. The $l_1$ norm constraint $\Vert \mathbf{x}\Vert_1 = \sum_{i=1}^n |x_i|\leq b$ is imposed to promote sparsity  and other desired properties. For example, the minimum variance portfolio with the shortsale constraint considered in \cite{Jagannathan03} is corresponding to the special case $\delta=1$ in the $l_1$ norm constraint \cite{DeMiguel09MS}.

Similarly to the minimum variance portfolio with the $l_2$ norm constraint, we can replace the equality constraint $\sum_{i=1}^n x_i = 1$ with an inequality constraint $\sum_{i=1}^n x_i \geq 1$ in the above formulation to obtain an equivalent reformulation that is special case of problem \eqref{eq:program-objective}-\eqref{eq:program-set-constraint} with $\tilde{f}(\mathbf{x})\equiv 0$, $\mathbf{g}(\mathbf{x}) = [1-\sum_{i=1}^n x_i, 0]\tran$ and $\tilde{\mathbf{g}}(\mathbf{x}) = [0, \sum_{i=1}^n |x_i|- b]\tran$. 

Since $\mathbf{M}$ is not diagonal, the objective function is not separable and hence at each iteration the update of $\mathbf{x}(t)$ in \cref{alg:general-alg} requires to solve an $n$-dimensional unconstrained composite minimization, which can have huge complexity when $n$ is large. In contrast, each iteration of \cref{alg:new-alg} has a closed form update for each coordinate by \cref{lm:l1-norm-closed-form}.

In the numerical experiment, we take $n=500$, $b=3/n$ and generate correlation matrix $\mathbf{M} = [\text{Diag}(\mathbf{N}\tran \mathbf{N})]^{-1/2}\mathbf{N}\tran \mathbf{N}[\text{Diag}(\mathbf{N}\tran \mathbf{N})]^{-1/2}$ where $N$ is an $n\times n$ matrix follows the standard Gaussian distribution. We run both \cref{alg:general-alg} and \cref{alg:new-alg} with the same initial point $\mathbf{x}(0) = \mathbf{0}$.  \cref{fig:nonsmooth_obj} and \cref{fig:nonsmooth_cons} show that both algorithms have quite similar convergence performance as observed in the zoom-in subfigures. However, when implementing both algorithms using MATLAB in a PC with a 4 core 2.7GHz Intel i7 CPU and 16GB memory, each iteration of \cref{alg:new-alg} only takes around $1.5$ milliseconds while each iteration of \cref{alg:general-alg} takes around $2.7$ seconds. (Note that our implementation uses CVX \cite{CVX} to solve the unconstrained composite minimization involved in each iteration of \cref{alg:general-alg}.) Thus, \cref{alg:new-alg} is $1800$ times faster than  \cref{alg:general-alg} in this example.

\begin{figure}[htbp]
\centering
   \includegraphics[width=0.8\textwidth,height=0.8\textheight,keepaspectratio=true]{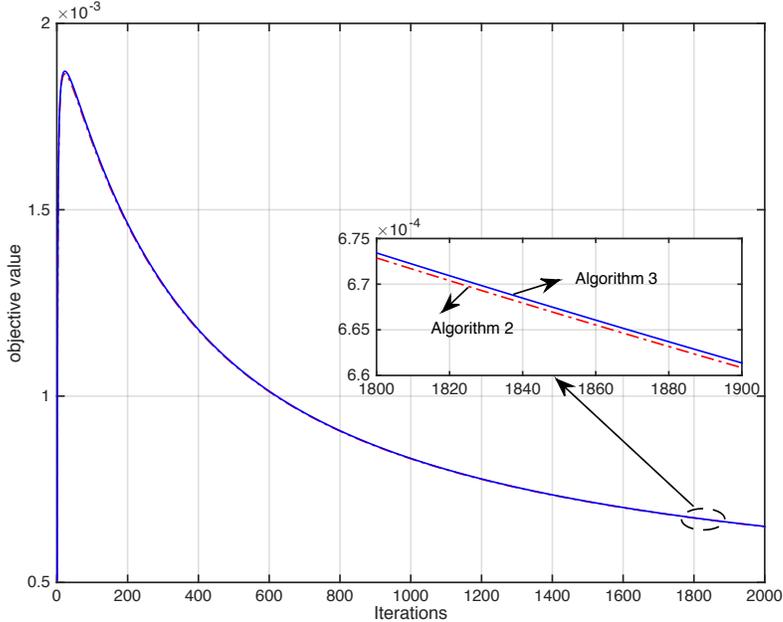} 
   \caption{Minimum variance portfolio with the $l_1$ norm constraint:  objective value performance.}
   \label{fig:nonsmooth_obj}
\end{figure}

\begin{figure}[htbp]
\centering
   \includegraphics[width=0.8\textwidth,height=0.8\textheight,keepaspectratio=true]{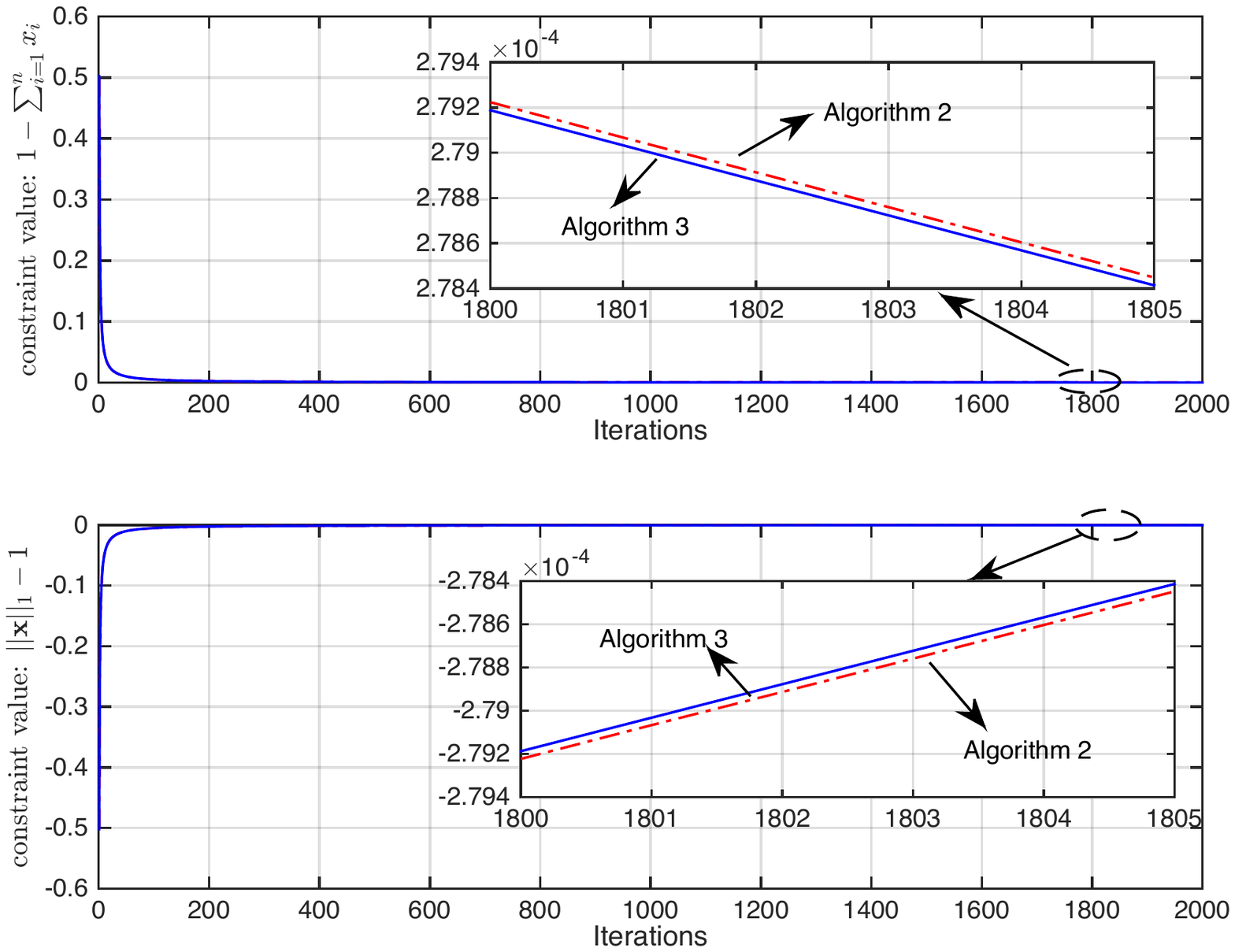} 
   \caption{Minimum variance portfolio with the $l_1$ norm constraint:  constraint value performance.}
   \label{fig:nonsmooth_cons}
\end{figure}

\section{Conclusion}

This paper proposes a new primal-dual type algorithm with  $O(1/\epsilon)$ convergence for constrained composite convex programs. The new algorithm is faster than the classical primal-dual subgradient algorithm and the dual subgradient algorithm, both of which have an $O(1/\epsilon^2)$ convergence time.  The new algorithm has the same convergence time as that of a parallel algorithm recently proposed in \cite{YuNeely17SIOPT} for convex programs with separable objective and constraint functions. However, if the objective or constraint function is not separable, the algorithm in \cite{YuNeely17SIOPT} is no longer parallel and each iteration requires to solve a set constrained convex program. In contrast, the algorithm proposed in this paper is still parallel when the convex program is smooth or the non-smooth part is separable.  In these cases, the new algorithm has much smaller per-iteration complexity than the algorithm in \cite{YuNeely17SIOPT}.

\bibliographystyle{siamplain} 
\bibliography{mybibfile}

\end{document}